\newtheorem{thm}{Theorem}[section]
\newtheorem{lem}[thm]{Lemma}
\newtheorem{cor}[thm]{Corollary}
\newtheorem{prop}[thm]{Proposition}
\theoremstyle{definition}
\theoremstyle{remark}
\newtheorem{rem}[thm]{Remark}
\numberwithin{equation}{section}
\newcommand{\Rmnum}[1]{\expandafter\@slowromancap\romannumeral #1@}
\newcommand{\bv}{\mathbf{v}}
\newcommand{\bb}{\mathbf{b}}
\newcommand{\ba}{\mathbf{a}}
\newcommand{\bm}{\mathbf{m}}
\newcommand{\bn}{\mathbf{n}}
\newcommand{\Xn}{X}
\newcommand{\n}{n}
\newcommand{\m}{m}
\newcommand{\bx}{\mathbf{x}}
\newcommand{\by}{\mathbf{y}}
\newcommand {\ignore}[1]  {}
\newcommand{\vre}{\varepsilon}
\newcommand{\bp}{\mathbf{p}}
\newcommand{\bq}{\mathbf{q}}
\newcommand{\R}{\mathbb{R}}
\newcommand{\N}{\mathbb{N}}
\newcommand{\SL}{\operatorname{SL}}
\newcommand{\Z}{\mathbb{Z}}
\newcommand{\Q}{\mathbb{Q}}
\newcommand{\spa}{\mathrm{span}}
\newcommand{\sm}{\smallsetminus}
\newcommand{\dd}{\, \mathrm{d}}
\newcommand{\ssm}{\smallsetminus}
\newcommand{\nz}{\smallsetminus\{{\mathbf 0}\}}
\newcommand\eq[2]{{\ifdraft{\ \tt [#1]}\else\ignorespaces\fi}\begin{equation}\label{#1}{#2}\end{equation}}
\newcommand {\equ}[1]{\eqref{#1}}
\font\sb = cmbx8 scaled \magstep0
\font\sn = cmssi8 scaled \magstep0
\long\def\combarak#1{\ifdraft{\color{red}\sn #1 }\else\ignorespaces\fi}
\long\def\comdima#1{\ifdraft{\color{green}\sb #1 }\else\ignorespaces\fi}
\newif\ifdraft\drafttrue
\newcommand\hs{homogeneous space}
\newcommand\da{Diophantine approximation}
\begin{document}

\title{Pointwise equidistribution with an error rate and with respect to unbounded functions}

\author{Dmitry Kleinbock}
\address{Department of Mathematics, Brandeis University, Waltham MA 02454, USA}
\email{kleinboc@brandeis.edu}

\author{Ronggang Shi}
\address{School of Mathematical Sciences, Tel Aviv University, Tel Aviv 69978, Israel, and
School of Mathematical Sciences, Xiamen University, Xiamen 361005, PR China}
 \email{ronggang@xmu.edu.cn}

\author{Barak Weiss}
\address{School of Mathematical Sciences, Tel Aviv University, Tel Aviv 69978, Israel}
\email{barakw@post.tau.ac.il}

\subjclass[2000]{Primary   28A33; Secondary 37C85, 22E40.}

\date{}


\keywords{homogeneous dynamics,  equidistribution, ergodic theorem, 
Diophantine approximation}

\begin{abstract} 
Consider $G=\SL_{ d }(\mathbb R)$ and $ \Gamma=\SL_{ d }(\mathbb Z)$.
It was recently shown by the second-named author \cite{s} that for
some diagonal subgroups $\{g_t\}\subset G$ and unipotent subgroups
$U\subset G$, $g_t$-trajectories of almost all points on all
$U$-orbits on $G/\Gamma$ are equidistributed  with respect to
continuous compactly supported functions $\varphi$ on $G/\Gamma$. In
this paper we strengthen this result in two directions: by exhibiting
an error rate of equidistribution when $\varphi$ is smooth and
compactly supported, and by proving equidistribution with respect to
certain unbounded functions, namely Siegel transforms of Riemann
integrable functions on $\R^d$.  
For the first part we use a method based on effective double
equidistribution of $g_t$-translates of $U$-orbits, which generalizes
the main result of \cite{km12}. The second part is based on Schmidt's
results on counting of lattice points. Number-theoretic
consequences  involving spiraling of lattice approximations, extending
recent work of Athreya, Ghosh and Tseng \cite{agt1}, are derived using the equidistribution
result. 
\end{abstract}

\maketitle

\section{Introduction}\label{sec;intro}

\markright{} Fix an integer $d \ge 2$,  let
$G=\SL_{ d }(\mathbb R)$ and $ \Gamma=\SL_{ d }(\mathbb Z)$,  and
denote by $ \Xn$ the \hs\  $G/\Gamma$, which can be identified with
the 
space of unimodular lattices in $\R^d$. 
The group $G$ acts on $\Xn$ by
left translations preserving the probability measure $\mu$ induced by Haar measure. This action
has been intensively studied due to its intrinsic interest as a
dynamical system and for its number-theoretic applications. See
\cite[Chapter 5]{survey} for a survey of this topic. 

Let $D=\{g_t\}$ be a 
one-parameter subgroup of $G$, and let $\varphi$
be a real-valued function on $\Xn$. We say that $\Lambda
\in \Xn$ is {\em $(D^+, \varphi)$-generic} if 
\eq{generic}{
\lim_{T\to \infty}\frac{1}{T}\int_0^T \varphi(g_t\Lambda)  \dd t= \int_{\Xn}
\varphi \dd\mu\,, 
}
and, for a collection $\mathcal{S}$ of functions, that $\Lambda$ is
{\em $(D^+, \mathcal{S})$-generic}  if it is $(D^+, \varphi)$-generic
for every $\varphi \in \mathcal{S}$. Let $C_c(\Xn)$ denote the space of
continuous compactly supported functions on $\Xn$. It is a well-known consequence of
Moore's ergodicity theorem and Birkhoff's ergodic theorem  that
for any unbounded subgroup $D$ of $G$, $\mu$-almost every $\Lambda\in\Xn$ is $\big(D^+,
C_c(\Xn)\big)$-generic. Also, using effective mixing estimates for the
$D$-action on $X$ one can conclude that whenever $\varphi$ belongs to
the space $C_c^\infty(\Xn)$ of smooth compactly supported functions on
$\Xn$, the convergence in \equ{generic} takes place with a certain
rate for $\mu$-a.e.\ $\Lambda\in\Xn$. More precisely, see
\cite[Theorem 16]{Kach} or \cite[Theorem 4(vii)]{Gap}, for any
$\varepsilon > 0$ and  $\mu$-a.e.\ $\Lambda\in\Xn$ one has  
\eq{eq;main rate in X}{
\frac{1}{T} \int_0^T \varphi \big (g_t  \Lambda \big)
\dd t=\int_{\Xn} \varphi \dd\mu+
o(T^{-1/2}\log^{\frac32+\varepsilon} T).}
(The notation $f(T) =
  o\big(g(T)\big)$ means $\lim_{T\to\infty} \frac{|f(T)|}{g(T)} = 0$.)

\smallskip
Now let $U^+$ be  the {\em unstable horospherical subgroup of $G$ relative to\/} $D^+$, defined by
\eq{ehs}{U^+ := \{g \in G : g_{-t}
gg_t \to e \text{ as }t \to\infty\}.
}
Assume in addition that  
$D$ is diagonalizable.
Then, using a local decomposition of $G$ as the product of its
unstable, neutral and stable horospherical subgroups with respect to
$D^+$, see e.g.\ \cite[\S 1.3]{km96},  it is easy to conclude that for
any $\Lambda \in \Xn$  the element  $g\Lambda$ is $\big(D^+, 
C_c(\Xn)\big)$-generic   for Haar-a.e.\ $g\in U^+ $.

Recently in \cite{s}, as a corollary of a more
general result, the second-named author obtained a similar conclusion
for some proper subgroups of $U^+$ -- namely, for the class of
so-called {\em $D^+$-expanding\/} subgroups, introduced in \cite{KW2}
(see also \cite{SW} for some related ideas).  More precisely, the
following is a special case of  \cite[Theorem 1.2]{s}:  
if $D$ is a diagonalizable one-parameter
subgroup of $G$ and $U$  is a connected $D^+$-expanding abelian subgroup of $U^+$, then
 \eq{shi}{  
 \forall\,\Lambda \in \Xn, \ 
  g\Lambda\text{  is $\big(D^+,
C_c(\Xn)\big)$-generic   for Haar-a.e.\ }g\in U.}

In the present paper we consider a specific family of pairs $(D,U)$,
where $D\subset G$ is  one-parameter    and $U\subset G$ is
$D^+$-expanding,  which are  important for number-theoretic
applications. Namely, take $m,n\in \N$ with $m+n = d$,  denote by  $M$
the space 
 of ${\m}\times {\n}$ real matrices, and  consider 
\eq{defU}{U := u(M)\ \text{ where }
u: M \to G \text{ is given by }
u(\vartheta):=\left(\begin{array}{cc}
1_{\m} & \vartheta \\
0 & 1_{\n}
\end{array}\right)
}
(here and hereafter $1_k$ stands for the identity matrix of order $k$).  Also fix two `weight vectors' 
\begin{align*} \mathbf a=(a_1,\ldots, a_{\m})\in &\mathbb R^{\m}_{>0} \quad \text{and}\quad
\mathbf b=(b_1, \ldots, b_{\n})\in \mathbb R^{\n}_{>0} \\
\qquad \mbox{such that }&\sum_{i=1}^{\m} a_i =\sum_{i=1}^{\n} b_i=1.
\end{align*}
We will refer to the case 
$${\ba = \bm =   (1/{\m},\dots,1/{\m}) , \  \bb = \bn =  (1/{\n},\dots,1/{\n})}$$ as
the case of {\em equal weights.} Then take 
\eq{defD}{D = \{g_t\},\  \text{ where }g_t=\mathrm{diag}(e^{a_1t},
  \ldots, e^{a_{\m}t}, e^{-b_1t},\ldots, e^{-b_{\n}t} ).  
}

It is easy to see that  
for any choice of weight vectors $\ba,\bb$,  the group $U$ as in
\equ{defU}  is contained in the unstable horospherical subgroup
relative to $D^+$  (and coincides with it in the case of equal
weights). It was shown in \cite{KW2} that $U$ is  $D^+$-expanding for
any $D$ as in \equ{defD}, which,  in view of \cite{s}, implies
\equ{shi}. 
For the rest of the paper we will fix arbitrary weight vectors
$\ba,\bb$ and choose $D$ as in \equ{defD}. 

\smallskip

Our first main result gives an analogue of \equ{eq;main rate in X}  for almost all points on $U$-orbits: 

\begin{thm}\label{thm;main rate}
Let  $\Lambda \in \Xn$, $\varphi\in C_c^\infty(\Xn)$ and $\varepsilon>0$ be given.
Then for  almost every $\vartheta\in M$ 
\begin{align}\label{eq;main rate goal}
\frac{1}{T} \int_0^T \varphi \big (g_t u(\vartheta)\Lambda \big)
\dd t=\int_{\Xn} \varphi \dd\mu+
o(T^{-1/2}\log^{\frac32+\varepsilon} T).
\end{align}
\end{thm}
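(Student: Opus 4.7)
The plan is a second-moment (variance) estimate for the ergodic average, combined with a G\'al--Koksma/Borel--Cantelli interpolation to upgrade the resulting $L^2$ bound to an almost-everywhere pointwise estimate. Fix a bounded set $B\subset M$ of positive Lebesgue measure, set $\tilde\varphi := \varphi - \int_{\Xn}\varphi\,\dd\mu$, and write
\begin{equation*}
R_T(\vartheta) := \frac{1}{T}\int_0^T \tilde\varphi\bigl(g_t u(\vartheta)\Lambda\bigr)\,\dd t.
\end{equation*}
The goal is to prove a variance bound $\int_B |R_T(\vartheta)|^2\,\dd\vartheta \ll T^{-1}$, with implicit constants depending on $B$, $\Lambda$, and a Sobolev norm of $\varphi$. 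From such an $L^2$ decay, the standard G\'al--Koksma lemma (applied along a geometric subsequence together with a monotonicity argument for interpolation between subsequence values) yields the a.e.\ rate $R_T(\vartheta) = o(T^{-1/2}\log^{3/2+\varepsilon}T)$, and since $B\subset M$ was arbitrary, the theorem follows.

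By Fubini, the variance equals
\begin{equation*}
\int_B |R_T(\vartheta)|^2\,\dd\vartheta \;=\; \frac{1}{T^2}\int_0^T\!\!\int_0^T I(s,t)\,\dd s\,\dd t,
\end{equation*}
where $I(s,t) := \int_B \tilde\varphi(g_s u(\vartheta)\Lambda)\,\tilde\varphi(g_t u(\vartheta)\Lambda)\,\dd\vartheta$. The commutation relation $g_t u(\vartheta) = u(a_t\vartheta)\,g_t$, where $a_t$ is the induced scaling action of $g_t$ on $M$ (expanding the $(i,j)$-entry by $e^{(a_i+b_j)t}$), together with the substitution $w = a_s\vartheta$ in the case $s\le t$ with $\tau := t-s$, rewrites $I(s,t)$ as a constant multiple of
\begin{equation*}
\int_{a_s B} \tilde\varphi\bigl(u(w)\,g_s\Lambda\bigr)\,\bigl(\tilde\varphi\circ g_\tau\bigr)\bigl(u(w)\,g_s\Lambda\bigr)\,\dd w.
\end{equation*}
In other words, $I(s,t)$ is a correlation of $\tilde\varphi$ with its $g_\tau$-twist, integrated against Lebesgue measure along a (growing) piece of the $U$-orbit through $g_s\Lambda$.

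The crux of the proof, and the step I expect to be the main obstacle, is an \emph{effective double equidistribution} bound of the form $|I(s,t)| \ll e^{-\alpha|t-s|}$, with polynomial dependence on a Sobolev norm of $\varphi$ and with uniform control as the base point $g_s\Lambda$ drifts through $\Xn$. This is precisely the extension of \cite{km12} advertised in the abstract: there the result is proved when $U$ is the full unstable horospherical subgroup $U^+$, and what is needed here is the generalization to an arbitrary $D^+$-expanding abelian subgroup $U\subset U^+$. The $D^+$-expansion hypothesis should be used to reduce the correlation along $U$-orbits to effective decay of matrix coefficients (i.e.\ exponential mixing) for the $D$-action, via the familiar ``thickening in the complementary directions'' argument, with $D^+$-expansion replacing the direct use of full unstable horospherical invariance. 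Once this effective estimate is in hand, straightforward integration in $(s,t)\in[0,T]^2$ yields the target bound $\int_B|R_T|^2\,\dd\vartheta \ll T^{-1}$, and the variance-to-pointwise step outlined above completes the proof.
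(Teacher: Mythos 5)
Your proposal is correct and follows essentially the same two-step strategy as the paper: a second-moment bound coming from effective double equidistribution of $g_t$-translates of $U$ (the paper's Theorem \ref{thm;mixing}), followed by a Cassels/Schmidt-style dyadic interpolation (the paper's Theorem \ref{thm;effective}, proved via Lemmas \ref{lem;effective help}--\ref{lem;effective key} and Borel--Cantelli) to upgrade the $L^2$ decay to the a.e.\ rate $o(T^{-1/2}\log^{3/2+\varepsilon}T)$. The only small imprecision is the claimed correlation bound $|I(s,t)|\ll e^{-\alpha|t-s|}$: what the effective double equidistribution actually gives is $e^{-\delta\min(t,w,|w-t|)}$ (the single-time equidistribution error $e^{-\delta t}$ is unavoidable for small $t$), but this weaker bound is exactly what the dyadic argument is designed to absorb, so nothing breaks.
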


Here and hereafter `almost every $\vartheta$' means almost every with
respect to Lebesgue measure on $M \cong \R^{{\m}{\n}}$; this
corresponds to taking a Haar measure on $U$.

\smallskip
Our proof of Theorem \ref{thm;main rate} is completely
independent of \cite{s}, and, in view of the density of
$C_c^\infty(\Xn)$   in $C_c(\Xn)$, provides an alternative
demonstration \equ{shi} for the case \equ{defU}--\equ{defD}.
It is based on the following `effective double equidistribution' of
$g_t$-translates of $U$, generalizing the 
`effective equidistribution' result of \cite{km12}:

\begin{thm}\label{thm;mixing}
There exists $\delta >0$ such that the following holds. 
Given $f\in C_c^\infty (M)$ and $\varphi,\psi \in C_c^\infty(\Xn) $
and 
 a compact subset  $L$  of $\Xn$, there exists $C
>0$ such that 
for any $\Delta, \Lambda \in L$ and $t, {w}\ge 0$  one has
\begin{align}\label{eq;mixing rate}
\left|
I_{\Delta,\Lambda{,f,\varphi,\psi}}{(t, w)} -\int_M f \dd\vartheta  \int_{\Xn} \varphi\dd\mu \int_{\Xn}\psi\dd\mu
\right|
\le C e^{-\delta \min (t,  {w}, |{w}-t|)},
\end{align}
where
\begin{align}\label{eq;mixing goal}
I_{\Delta,\Lambda{,f,\varphi,\psi}}(t,{w}) :=
\int_{M} f(\vartheta)\varphi
\big(g_tu(\vartheta)\Delta\big)\psi \big(g_{w}u(\vartheta)\Lambda\big)\dd  \vartheta.
\end{align}
\end{thm}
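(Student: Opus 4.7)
The proof extends the effective single equidistribution of \cite{km12} (the $\psi\equiv 1$ case) to handle a product of two test functions evaluated at different $g_t$-translates.

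\emph{Step 1 (Mean-zero reduction).} Decompose $\varphi=\bar\varphi+\tilde\varphi$ and $\psi=\bar\psi+\tilde\psi$, with $\bar\varphi:=\int_\Xn\varphi\dd\mu$ and $\bar\psi:=\int_\Xn\psi\dd\mu$. Expanding the integrand of $I_{\Delta,\Lambda,f,\varphi,\psi}(t,w)$ gives four summands: (i) the main term $\bar\varphi\bar\psi\int_M f\dd\vartheta$; (ii) two ``single zero-mean'' cross terms such as $\bar\psi\int_M f(\vartheta)\tilde\varphi(g_tu(\vartheta)\Delta)\dd\vartheta$, which are $O(e^{-\delta_0 t})$ and $O(e^{-\delta_0 w})$ by \cite{km12} for some $\delta_0>0$; and (iii) the ``double zero-mean'' remainder
\[
J(t,w):=\int_M f(\vartheta)\,\tilde\varphi(g_tu(\vartheta)\Delta)\,\tilde\psi(g_wu(\vartheta)\Lambda)\dd\vartheta,
\]
for which we must show $|J(t,w)|\le Ce^{-\delta\min(t,w,|w-t|)}$. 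By the symmetry $(\varphi,\Delta,t)\leftrightarrow(\psi,\Lambda,w)$, assume $t\le w$ and put $s:=w-t\ge 0$; the target becomes $|J|\le Ce^{-\delta\min(t,s)}$.

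\emph{Step 2 (Two regimes).} Fix a threshold $\alpha>0$ to be chosen below. In the regime $s\le\alpha t$, setting $\tilde\psi_s:=\tilde\psi\circ g_s\in C_c^\infty(\Xn)$, the integrand of $J$ becomes $f(\vartheta)\,\tilde\varphi(g_tu(\vartheta)\Delta)\,\tilde\psi_s(g_tu(\vartheta)\Lambda)$. When $\Delta=\Lambda$, apply \cite{km12} to the single test function $\tilde\varphi\tilde\psi_s$ on $\Xn$: the main term $\int f\cdot\int_\Xn \tilde\varphi\tilde\psi_s\dd\mu=\int f\cdot\langle\tilde\varphi,\tilde\psi\circ g_{-s}\rangle=O(e^{-\delta_0 s})$ by effective mixing (Howe--Moore) of $\{g_s\}$ on $\Xn$; the \cite{km12}-error is $O(e^{-\delta_0 t}\|\tilde\varphi\tilde\psi_s\|_{C^k})$, and the distortion bound $\|\tilde\psi_s\|_{C^k}\le Ce^{Cks}$ (from $\|\Ad(g_s)\|\lesssim e^{(a_{\max}+b_{\max})s}$) gives an overall error $Ce^{-\delta_0 t+Cks}$; choosing $\alpha<\delta_0/(2Ck)$ yields $|J|\le Ce^{-\delta_0 s/(2\alpha)}$. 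For $\Delta\ne\Lambda$, adapt via the thickening of \cite{km12}: approximate the $U$-integral by $\int_G F(g)\tilde\varphi(g_tg\Delta)\tilde\psi_s(g_tg\Lambda)\dd g$ for $F\in C_c^\infty(G)$ built from $f$ via the decomposition $G=P^-U$, then after the change $g\mapsto g_{-t}g$ invoke Howe--Moore decay of matrix coefficients for the diagonal $G$-action on $L^2_0(\Xn\times\Xn)$. In the regime $s>\alpha t$, treat $h_t(\vartheta):=f(\vartheta)\tilde\varphi(g_tu(\vartheta)\Delta)$ as a test function on $M$ and apply \cite{km12} to the $\psi$-factor: the main term vanishes since $\int\tilde\psi\dd\mu=0$, and $|J|\le Ce^{-\delta_0 w}\|h_t\|_{C^k}\le Ce^{-\delta_0 w+Ckt}\le Ce^{-(\delta_0(1+\alpha)-Ck)t}$ (using $w\ge(1+\alpha)t$ and the analogous bound for $\Ad(g_t)$), which is $O(e^{-\delta't})$ for $\alpha>Ck/\delta_0$; here $\min(t,s)=t$.

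\emph{Step 3 (Conclusion and obstacle).} The two constraints $Ck/\delta_0<\alpha<\delta_0/(2Ck)$ are simultaneously solvable whenever $2(Ck)^2<\delta_0^2$, which holds for the Howe--Moore rate $\delta_0>0$ of $\SL_d(\R)$ and fixed Sobolev order $k$ from \cite{km12}, possibly after shrinking $\delta_0$. Taking $\delta$ to be the minimum of the decay exponents arising in (ii) and in both regimes of Step 2 gives the claim. The main obstacle is the competition between the exponential Sobolev blowup $\|\tilde\psi\circ g_s\|_{C^k}\lesssim e^{Cks}$ (and analogously $\|h_t\|_{C^k}\lesssim e^{Ckt}$), forced by the $D^+$-expansion on $U$, and the effective equidistribution rate from \cite{km12}; the case split ensures that the mixing always consumes the shorter of the two time parameters $t$ and $s$, so that the complementary parameter controls the Sobolev growth.
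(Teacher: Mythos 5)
Your proposal has two genuine gaps, and the first one is fatal to the regime-splitting scheme. The two constraints you impose on the threshold $\alpha$, namely $\alpha>Ck/\delta_0$ (needed in the regime $s>\alpha t$ so that $e^{-\delta_0 w+Ckt}$ decays) and $\alpha<\delta_0/(2Ck)$ (needed in the regime $s\le\alpha t$ so that $e^{-\delta_0 t+Cks}$ decays), are compatible only if $\delta_0>\sqrt{2}\,Ck$. But $Ck$ is at least of the order of the top expansion rate times the Sobolev order from \cite{km12} (with $k\ge mn$ and $C\ge a_{\max}+b_{\max}$ this is already $\ge d$), while $\delta_0$ is a small effective-equidistribution exponent; and your remedy ``possibly after shrinking $\delta_0$'' goes in the wrong direction, since shrinking $\delta_0$ enlarges $Ck/\delta_0$ and shrinks $\delta_0/(2Ck)$. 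Consequently the middle range $s\asymp t$ -- exactly where the bound $e^{-\delta\min(t,w-t)}$ has content -- is covered by neither regime. The underlying problem is that you are trading the equidistribution rate in one time variable against the Sobolev blow-up $\|\psi\circ g_s\|_{C^k}\lesssim e^{Cks}$ (resp.\ $\|h_t\|_{C^k}\lesssim e^{Ckt}$) in the other, and the rate always loses. The paper avoids this entirely: it thickens only in the $U$-direction at a scale $r=e^{-\delta|w-t|}$ that it is free to choose (via a bump $h$ with $\|h\|_k\ll r^{-2k}$), writes $g_t=g_t'g_t''$ with $g'$ the equal-weights flow, and for each $\vartheta$ lets the time gap $s-t=(w-t)/2$ expand the tiny $U$-ball around $u(\vartheta)$, applying the Sobolev-explicit equidistribution estimate for $g'$ (Proposition \ref{prop;injective}) at the base point $g''_{s-t}g_s u(\vartheta)\Lambda$; the injectivity-radius hypothesis there is guaranteed off a set of $\vartheta$ of measure $\ll e^{-\delta(w-t)}$ by quantitative non-divergence (Proposition \ref{prop;quantitative}). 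The trade-off is then between the chosen scale $r$ and the gap $w-t$ (settled by taking $\delta\le\delta_0/(2(1+3k))$), not between $t$ and $w-t$, and the $\varphi$-factor is handled at time $t$ by Proposition \ref{prop;one mixing}; this yields decay in $\min(t,w-t)$ uniformly in all regimes.

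The second gap is the case $\Delta\ne\Lambda$, which the theorem requires uniformly over $L\times L$. Thickening $f$ to $F\in C_c^\infty(G)$ and invoking decay of matrix coefficients of the diagonal $G$-action on $L^2_0(\Xn\times\Xn)$ cannot work as described: the thickened object $\{(g\Delta,g\Lambda):g\in\supp F\}$ is carried by a single $G$-orbit in $\Xn\times\Xn$, a null set for $\mu\times\mu$, so the integral against it is not a matrix coefficient of $L^2$-vectors and mixing of the diagonal flow says nothing about its translates. Equivalently, the diagonal copy of $U$ is a proper subgroup of the unstable horospherical subgroup $U^+\times U^+$ of $g_t\times g_t$ in $G\times G$, so the classical thickening/mixing argument does not apply on the product; equidistribution in $\Xn\times\Xn$ must come from the shearing produced by the relative displacement $g_{w-t}$ between the two factors, which is precisely what the paper's conditional argument (Propositions \ref{prop;quantitative} and \ref{prop;injective} applied fiberwise in $\vartheta$) extracts, and what your sketch does not. (A smaller issue: Proposition \ref{prop;one mixing} as quoted gives a constant $C_1(f,\varphi,L)$ with no explicit Sobolev dependence, so even your regime computations presuppose a strengthened, Sobolev-uniform version applied to the $s$- or $t$-dependent families $\tilde\varphi\tilde\psi_s$ and $h_t$.) Your Step 1 mean-zero reduction and the $\Delta=\Lambda$, $s\ll t$ computation are fine as far as they go, but they do not assemble into a proof of the stated theorem.
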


Under the same assumptions one can also prove `effective $k$-fold equidistribution' of $g_t$-translates of $U$ for all
$k\in\N$. We hope to return to this topic elsewhere. 
See
also
\cite{BEG} and   \cite{konst} for related results on `effective $k$-mixing'.
To derive pointwise equidistribution of $g_t$-trajectories of points
on $U$-orbits with an error rate from  
effective double equidistribution of $g_t$-translates of $U$ we use a
method borrowed from Schmidt's work \cite{s60}  
and originally due to Cassels \cite{cassels}. It is also similar
  to the argument used in \cite{Gap} to derive the rate of convergence
  in Birkhoff's Theorem from the rate of decay of correlations. As a
  byproduct of the method, we show in \S\ref{sec: effective} (Remark
  \ref{alternative proof}) how the estimate  \equ{eq;main rate in X}
  for $\mu$-a.e.\ $\Lambda$ can be derived from the exponential mixing
  of the $D$-action on $\Xn$.

\smallskip

The second main theme of the present paper is establishing $(D^+,
\varphi)$-genericity with respect to some unbounded functions
$\varphi$. It follows from Birkhoff's Theorem that for any 
 $\varphi \in L^1(X,\mu)$,  $\mu$-almost every $\Lambda
\in \Xn$ is $(D^+, \varphi)$-generic; however a passage from $\mu$-a.e.\ $\Lambda
\in \Xn$ to Haar-almost all points on $U^+$-orbits requires some regularity of $\varphi$, such as 
continuity and Lipschitz property away from a compact subset. 
When $U^+$ is replaced by its proper subgroup, the situation becomes more complicated. In particular,  our method of proof of Theorem \ref{thm;main rate}, as well as the argument in \cite{s}, 
do not work for unbounded functions.
Yet we are able to prove a partial result for the setup
\equ{defU}--\equ{defD} and explore its number-theoretic consequences.  

In order to control the rate of growth at infinity of unbounded
functions on $\Xn$, following \cite{emm98},  
let us introduce a function measuring `penetration into 
the thin part of $X$'. For a lattice $\Lambda \in X$, given a subgroup
$\Lambda' \subset \Lambda$, let 
$d(\Lambda')$ denote the covolume of $\Lambda'$ in
$\mathrm{span}_{\R}(\Lambda')$ (measured with respect to the standard
Euclidean structure on $\R^d$). We denote 
$$
\alpha(\Lambda) = \max \left\{d(\Lambda')^{-1
} : \Lambda'
\text{ a subgroup of } 
\Lambda \right\}.
$$
 It is well-known that this maximum is attained, and defines a proper
map $X \to [1, \infty)$. 

Now let us denote by $C_{\alpha}(X)$  the space of  functions
$\varphi$ on $X$ satisfying the following two properties: 
\begin{itemize}
\item[($C_\alpha$-$1$)] $\varphi$ is continuous except on a set of $\mu$-measure zero;
\item[($C_\alpha$-$2$)] the growth of $\varphi$  
is majorized by  $\alpha$, namely  there exists $C>0$ 
such that for all $\Lambda \in X $, we have 
$$
|\varphi(\Lambda)| \leq C\alpha(\Lambda)
$$
(in particular, $\varphi$ is bounded on compact sets).
\end{itemize}

One can show that the space $C_{\alpha}(X)$ contains certain unbounded
functions which often arise in number-theoretic applications.
Recall
that $f: \mathbb R^{ d }\to \mathbb R$ is said to be 
\emph{Riemann 
integrable} if $f$ is bounded with bounded support
and continuous except on  a set of Lebesgue measure zero.  
  {For such $f$, we} define a function $\widehat f$ on
$\Xn$ by 
\eq{eq: Siegel transform}{
\widehat f\left ( \Lambda \right):=\sum_{\mathbf v\in \Lambda \nz} f(\mathbf v).
}
The {\em Siegel integral formula}, established in \cite{siegel}, asserts that for any $f$ as above,
$\int_{\Xn} \widehat f \dd\mu=\int_{\mathbb R^{ d }} f$.
We show in Lemma \ref{lem: Blichfeldt} that for any Riemann integrable
$f$, the function $\widehat{f}$ satisfies conditions ($C_\alpha$-$1$) and 
($C_\alpha$-$2$). 
In fact, up to constants, when $f$ is non-negative and 
  {nonzero on a set of positive measure}, the order of growth of $\widehat f$
is precisely the same as that of $\alpha$. 
As a consequence, in view of the Siegel integral formula, $C_\alpha(X)$ is
contained in $L^1(X,\mu)$. 

\smallskip

For our second main result,
we denote by $\Lambda_0$ the  
standard lattice $\Z^d\subset \R^d$.

\begin{thm}\label{thm;main} Let $U$ and $D$ be as in \equ{defU}--\equ{defD}.
Then for almost every $\vartheta\in M$, 
$u(\vartheta)\Lambda_0$ is 
$\big(D^+, C_{\alpha}(X)
\big)$-generic. 
\end{thm}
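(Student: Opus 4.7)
My plan is to split the proof into two steps that combine via a sandwich approximation. In Step~1, I handle Siegel transforms $\widehat f$ of Riemann integrable $f\colon\R^d\to\R$ directly via lattice-point counting, and in Step~2 I leverage Step~1 together with the known pointwise equidistribution result \eqref{shi} for $C_c(\Xn)$-functions to treat all $\varphi\in C_\alpha(\Xn)$. The reduction in Step~2 goes as follows: since $\alpha$ is proper, for each $K>0$ I write $\varphi = \varphi\eta_K + \varphi(1-\eta_K)$, where $\eta_K\in C_c(\Xn)$ is a cutoff equal to $1$ on $\{\alpha\le K\}$ and $0$ on $\{\alpha\ge 2K\}$. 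The compactly supported, bounded and a.e.-continuous function $\varphi\eta_K$ can be sandwiched above and below by continuous compactly supported functions whose integrals with respect to $\mu$ differ by at most any prescribed $\vre>0$, and \eqref{shi} applies to each bracket. The residual $\varphi(1-\eta_K)$ is majorized in absolute value by $C\alpha\cdot \chi_{\{\alpha>K\}}$, which by the growth properties of $\alpha$ (Lemma~\ref{lem: Blichfeldt}) is pointwise dominated by the Siegel transform $\widehat{f_K}$ of a nonnegative Riemann integrable $f_K$ on $\R^d$ with $\int_{\R^d}f_K\to 0$ as $K\to\infty$ (the latter being possible because $C_\alpha(\Xn)\subset L^1(\Xn,\mu)$). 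Step~1 then gives a vanishing bound for the contribution of this tail, uniformly for a.e.\ $\vartheta$.

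For Step~1, fix Riemann integrable $f$ and exchange the time integral with the Siegel sum:
\begin{align*}
\frac{1}{T}\int_0^T \widehat f\big(g_t u(\vartheta)\Lambda_0\big)\dd t
= \frac{1}{T}\sum_{\bv\in\Lambda_0\nz} F_T\big(u(\vartheta)\bv\big),\qquad
F_T(\bw):=\int_0^T f(g_t\bw)\dd t.
\end{align*}
The function $F_T\colon\R^d\to\R$ is supported in the expanding, highly anisotropic tube $\bigcup_{0\le t\le T}g_{-t}\supp(f)$, and by $g_t$-invariance of Lebesgue measure on $\R^d$ one has $\int_{\R^d}F_T(\bw)\dd\bw = T\int_{\R^d}f$. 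The right-hand side above is therefore a weighted count of vectors of the lattice $u(\vartheta)\Lambda_0$ in this tube; this is precisely the setting of Schmidt's classical lattice-point counting estimates \cite{s60}, which give, for a.e.\ $\vartheta\in M$, that such sums differ from their expected value $T\int_{\R^d}f$ by $o(T)$. Combined with the Siegel integral formula $\int_{\Xn}\widehat f\dd\mu = \int_{\R^d}f$, this yields $(D^+,\widehat f)$-genericity of $u(\vartheta)\Lambda_0$ for almost every $\vartheta$.

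The main obstacle is Step~1: obtaining a Schmidt-type counting estimate tailored to the anisotropic family of tubes above, which are long in the unstable directions and thin in the stable directions in proportions dictated by the weights $\ba,\bb$. One needs a version of Schmidt's result valid for such expanding anisotropic regions, strong enough to yield almost-everywhere (and not merely in-measure) convergence, and flexible enough to accommodate the weighted integrand $F_T$ in place of a plain indicator. A secondary technical point in Step~2 is the construction of the dominating Siegel transform $\widehat{f_K}$: this relies on the fact, implicit in Lemma~\ref{lem: Blichfeldt}, that $\alpha(\Lambda)$ being large forces $\Lambda$ to contain short vectors, so that $\widehat{\chi_{B(0,r_K)}}$ for a suitable $r_K\to 0$ dominates the tail of $\alpha$ while having small Siegel integral.
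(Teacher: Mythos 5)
Your Step~2 is essentially the paper's own scheme (its Lemma \ref{lem;truncation}, Corollary \ref{cor;tail} and Corollary \ref{cor;dominated}): split off a compactly supported piece handled by \equ{shi}, and control the tail by a dominating Siegel transform whose time averages are known to converge. But your Step~1 --- which you yourself identify as the main obstacle --- is a genuine gap, and it is not a technicality: it is, in disguise, the full strength of the theorem for Siegel transforms. Schmidt's results in \cite{s60} do not count lattice points of $u(\vartheta)\Lambda_0$ in arbitrary expanding anisotropic tubes $\bigcup_{0\le t\le T}g_{-t}\supp(f)$, nor do they accommodate a weighted integrand $F_T$; they count, for a.e.\ $\vartheta$, solutions of systems of the special arithmetic form $0\le(\vartheta\bq)_i-p_i<\psi_i(\|\bq\|)$, $\|\bq\|<e^T$, i.e.\ lattice points in the very particular star-shaped regions $E_{T,c}(\mathbb S^{\m-1}_+,\mathbb S^{\n-1}_+)$ of Theorem \ref{thm;schmidt use}. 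The paper even stresses that already Theorem \ref{thm;diophantine} (indicators of $E_{T,c}(A,B)$ with spherical constraints) ``does not follow from the techniques of \cite{s60, S60a}''; in this paper that statement is a \emph{consequence} of Theorem \ref{thm;main}, not an input to it. So the sentence ``this is precisely the setting of Schmidt's classical lattice-point counting estimates'' is where your argument breaks down.

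The paper's route around this is the actual content of \S\ref{sec;count}--\S\ref{sec:unbdd}: Schmidt is used only for the specific sets $E_{r,c}$ and $F_{r,c}$ (after symmetrizing over sign choices $\zeta_I,\eta_J$), the identity \equ{eq;key link} converts those counts into time averages of $\widehat{\mathbbm 1}_{E_{r,c}}$, the lower bound in Corollary \ref{cor;count equi f} additionally needs the $(D^+,C_c(\Xn))$-genericity of \cite{s}, annuli are dominated by $E_{r',c}\cup F_{r',c}$ (Lemma \ref{lem;diophantine}), and balls containing the origin are reduced to annuli by the translation trick of Lemma \ref{lem;key}; only then does Corollary \ref{cor;dominated} with the dominating function $c\,\widehat{\mathbbm 1}_{B_r}$ give all of $C_\alpha(X)$. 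Note also that your proposed dominating functions $\widehat{\chi_{B(0,r_K)}}$ are supported exactly where the counting is hardest --- small neighborhoods of the origin, which Schmidt's regions avoid ($\|\by\|_\bb\ge 1$, resp.\ $\|\bx\|_\ba\ge 1$) and which the paper reaches only through the shift-to-annulus argument --- so controlling their time averages again requires the Step~1 you have not supplied; the paper instead keeps a fixed dominating function $\widehat{\mathbbm 1}_{B_r}$ and puts the $\vre$-smallness into the cutoff $\int(1-\varphi)\widehat f_0\dd\mu$, which is why it never needs $\int f_K\to 0$. (Your secondary claim that $\alpha\cdot\chi_{\{\alpha>K\}}$ can be dominated by Siegel transforms of small integral is plausible for $d\ge 2$ via successive minima, but it does not rescue Step~1.)
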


Since $\big(D^+,C_c(\Xn)\big)$-genericity has already been proved,
the
main additional point in
the proof of Theorem \ref{thm;main} is  to obtain 
 upper bound{s} for Birkhoff averages of {restrictions of non-negative $\varphi\in 
 C_\alpha(X)$ to complements of large compact subsets of $X$.}
To this end we
employ a lattice point counting result of Schmidt \cite{s60}. 
Lattice 
point counting  was
used for a similar purpose in \cite{ms14},
and the
connection between Schmidt's result and the action of $D^+$ was already noted in
\cite{apt} in the equal weights case.
Note that in our result we assume that the lattices are of the form
$ u(\vartheta)\Lambda_0$; 
we expect a similar result to be true if $\Lambda_0$ is replaced by
any other lattice in $\Xn$. However our proof does not yield this more
general statement. 

\smallskip

Let us now explain the number-theoretic implications of Theorem \ref{thm;main}. 
In Diophantine approximation  one interprets $\vartheta \in M$ as a
system of ${\m}$ linear forms in $n$ variables and studies how close to
integers are the values $\vartheta\bq$ of those forms at integer vectors $\bq$. 
During recent years there have been many developments in {\em \da\
  with weights\/}, which allows to treat individual  components of
$\bq$ and $\vartheta\bq$ differently.     
This is done by choosing weight vectors $\ba,\bb$ and considering
`weighted quasi-norms' introduced in  \cite{dima weights}: 
\[
\|\mathbf x\|_{\mathbf a}=\max_{1\le i\le {\m}} |x_i|^{\frac{1}{a_i}} 
\quad \left(\text{resp.}~\|\mathbf y\|_{\mathbf b}=\max_{j\le i\le {\n}}
|y_j|^{\frac{1}{b_j}} \right).
\]
It is a consequence of the general version of the Khintchine-Groshev Theorem proved by Schmidt 
\cite{s60}
 that for  a.e.\  $\vartheta \in M$ 
and any $c > 0$ there are infinitely many
solutions $(\bp,\bq)\in\Z^{\m}\times\Z^{\n}
$ to the
inequality 
\eq{kg1}{\|\vartheta\bq - \bp\|_{\ba} < \frac c{\|\bq\|_{\bb}}.}
More precisely, the number of integer
solutions of \equ{kg1} with $\bq$ satisfying
\eq{kg2}{1\le \|\bq\|_{\bb} < e^T}
has the same asymptotic growth
as $ 2^d c T$  (here and hereafter we say that $f(T)$ and $g(T)$ {\em have the same
 asymptotic growth\/}, denoted $f(T) \thicksim g(T)$, if $\displaystyle{\lim_{T \to \infty} 
f(T)
/g(T)
= 1}$). 
We remark that the set of nonzero integer solutions of \equ{kg1}--\equ{kg2}
 is in one-to-one correspondence with the intersection of the lattice
$${u(\vartheta) \Lambda_0 = 
\left\{ \left( \begin{matrix}\vartheta \mathbf{ q} - \mathbf{p} \\
      \mathbf{q} \end{matrix} \right) : \mathbf{p} \in \Z^{\m}, \,
  \mathbf{q} \in \Z^{\n} \right\}
}$$
with the set 
$$
E_{ T, c} :=
\left\{(\bx, \by) \in \R^{\m} \times \R^{\n}:\|\bx\|_\ba  < \frac c{\|\by\|_\bb},\ \  
1\le \|\by\|_\bb<e^{T} \right\},
$$
and an elementary computation shows that the volume 
of this set is equal to $ 2^d c T$.

\ignore{
Instead of \equ{kg}--\equ{lessthan}, we consider the system
\eq{kgweighted}{
\begin{cases}\|\vartheta\bq - \bp\|_{\ba} &< \frac c{\|\bq\|_{\bb}} \\ \qquad\ \,  \|\bq\|_{\bb} &< e^T\end{cases}}
Schmidt's work  implies that  as $T\to\infty$, the number
of integer solutions of the system \equ{kgweighted}  for a.e.\
$\vartheta$ 
grows asymptotically like . 

For
example, 
(here and hereafter $\|\cdot\|$stands for the supremum norm on $\R^n$, $\R^m$). 
Furthermore, according to Schmidt's result \cite{s60} the number of
solutions of \equ{kg} in a ball of large radius grows as the volume of certain
family of regions in $\R^{m+n}$. 
Namely, define
$$
E_{ T, c} := \left
\{(\bx, \by) \in \R^n \times \R^m:\|\bx\|  <\left( \frac c{\|\by\|^{m}}\right)^{1/n},\ \  
1\le \|\by\|<e^{T/m} \right\};
$$
then the set of integer solutions of \equ{kg}
with  \eq{lessthan}{\|\bq\| < e^{T/m}} is in one-to-one correspondence with the intersection of the lattice
$${u(\vartheta) \Lambda_0 = 
\left\{ \left( \begin{matrix}\vartheta \mathbf{ q} - \mathbf{p} \\
      \mathbf{q} \end{matrix} \right) : \mathbf{p} \in \Z^n, \,
  \mathbf{q} \in \Z^m \right\}
}
$$
with $E_{ T, c} $. 
A special case of Schmidt's theorem says that for every $c > 0$ and
a.e.\  $\vartheta \in M$, as $T\to\infty$ the cardinality of this intersection
  has the same asymptotic growth
as the Lebesgue measure  $
|E_{ T, c} |$ of $E_{ T, c} $.
  (we say that $f(T)$ and $g(T)$ have the same
 asymptotic growth, denoted $f(T) \thicksim g(T)$, if $\displaystyle{\lim_{T \to \infty} 
f(T)
/g(T)
= 1}$).  Note that an elementary computation shows that $
|E_{ T, c} | = 2^d c T$.}

\smallskip
A finer question concerning  directions of  vectors
$\vartheta\bq - \bp$ and $\bq$ was addressed in a recent paper \cite{agt1} by Athreya,  Ghosh and Tseng. In the equal weights case 
 they considered integer solutions of \equ{kg1}--\equ{kg2} in addition satisfying 
 \eq{unweightedspherical}{\pi( \vartheta\bq - \bp) \in  A\text{ and }\pi( \bq) \in B ,}
 where $\pi$ stands for the radial
projection from $\R^{\m}$ and $\R^{\n}$ to the unit spheres $\mathbb
S^{{\m}-1}$ and  $\mathbb S^{{\n}-1}$ respectively, and  $A \subset \mathbb 
S^{{\m}-1}, \ B \subset \mathbb S^{{\n}-1}$ are two measurable
subsets. When  the boundaries of $A$ and $B$ are of measure zero, they
showed that  
 for almost every  $\vartheta \in M$ the 
number  of  solutions of \equ{kg1}--\equ{unweightedspherical} (with $\ba = \bm$ and $\bb = \bn$)
has the same asymptotic growth as the volume of the set 
\begin{equation*}
\left\{(\bx, \by) \in \R^{\m} \times \R^{\n} :  \begin{aligned}\|\bx\|^{\m}  <  \frac c{\|\by\|^{{\n}}}&,\ \  
1\le \|\by\|^{\n}<e^T\\ \pi( \bx) \in  A&, \ \  
\pi( \by) \in B\end{aligned} 
\right\}
\end{equation*}
(here $\|\cdot\|$ stands for the supremum norm; 
note that \cite{agt1} deals only with the case $m=1$ and uses
  Euclidean norm instead of the supremum norm, but the argument can be
  applied to an arbitrary $m$ and arbitrary norm). 

In the case of arbitrary weight vectors it no longer makes sense to project radially, since these
projections accumulate near directions corresponding to
smallest (resp., largest) weights. We remedy this as follows. Let
$F_{\mathbf a t}$ be the $\mathbf a$-weighted  flow on $\mathbb R^{\m}$
defined by  
 \[
F_{\mathbf a t}( \mathbf x):=\left(e^{a_1t}x_1, \ldots, e^{a_{\m}t}x_{\m} \right)
 \]
(note that in the case of equal weights, these are just homotheties of
$\R^{\m}$). Similarly we define the transformation $F_{\bb t}$ of $\R^{\n}$. Then 
for  nonzero $\mathbf x\in \mathbb R^{\m}$ and $\by \in \mathbb R^{\n}$  let 
\eq{defpi}
{\pi_{\ba}( \mathbf x):=\{ F_{\mathbf a t}(\mathbf x): t\in \mathbb R\}
  \cap \mathbb S^{{\m}-1}, \ \pi_{\bb}( \mathbf y):=\{ F_{\mathbf b t}(\mathbf y): t\in \mathbb R\}
  \cap \mathbb S^{{\n}-1}}
 (these intersection point are clearly unique),
replace
\equ{unweightedspherical} with \eq{weightedspherical}{\pi_\ba(
  \vartheta\bq - \bp) \in  A\text{ and }\pi_\bb( \bq) \in B,} 
  and define 
$$
E_{ T, c}(A,B) := 
\left\{(\bx, \by) \in \R^{\m} \times \R^{\n}: \begin{aligned}\|\bx\|_\ba  < \frac c{\|\by\|_\bb}&,\ \  
1\le \|\by\|_\bb<e^{T}\\ \pi_\ba( \bx) \in  A&,\ \  
\pi_\bb( \by) \in B\end{aligned}\right\}.
$$
Using  Theorem \ref{thm;main}, we obtain: 

\begin{thm}\label{thm;diophantine}
Let  $c > 0$ and measurable  subsets $A \subset \mathbb
S^{{\m}-1}, \ B \subset \mathbb S^{{\n}-1}$ with boundaries of measure zero be given.
Then for a.e.\  $\vartheta \in M$, as $T\to\infty$,  the number of
integer solutions  to \equ{kg1}, \equ{kg2} and \equ{weightedspherical}
has
the same asymptotic growth as
the volume of $E
_{ T, c}(A,B)$. 
\end{thm}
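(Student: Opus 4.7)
The plan is to express the counting function as a Siegel transform of the indicator of $E_{T,c}(A,B)$, then reduce to the ergodic average provided by Theorem~\ref{thm;main}. I first introduce the ``unit slab''
\[
S_c(A,B) := \{(\bx,\by)\in\R^\m\times\R^\n : \|\bx\|_\ba < c/\|\by\|_\bb,\ 1 \le \|\by\|_\bb < e,\ \pi_\ba(\bx) \in A,\ \pi_\bb(\by) \in B\}.
\]
Its indicator is Riemann integrable (since $\partial A$ and $\partial B$ have measure zero), so by Lemma~\ref{lem: Blichfeldt} the Siegel transform $h := \widehat{\chi_{S_c(A,B)}}$ lies in $C_\alpha(X)$. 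Theorem~\ref{thm;main} together with the Siegel integral formula then gives, for a.e.\ $\vartheta \in M$,
\[
\frac{1}{T} \int_0^T h(g_t u(\vartheta)\Lambda_0)\,\dd t \;\longrightarrow\; \int_X h\,\dd\mu = |S_c(A,B)|.
\]

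The next step is geometric: since $g_t$ preserves the conditions $\|\bx\|_\ba < c/\|\by\|_\bb$, $\pi_\ba(\bx)\in A$, $\pi_\bb(\by)\in B$, while rescaling $\|\by\|_\bb$ by $e^{-t}$, one has the disjoint decomposition $E_{T,c}(A,B) = \bigsqcup_{k=0}^{\lfloor T\rfloor-1} g_{-k} S_c(A,B)$ up to a remainder slab of thickness $<1$, and consequently $|E_{T,c}(A,B)| = T\cdot|S_c(A,B)|$ by a direct volume computation. The number $N(T,\vartheta)$ of integer solutions to \eqref{kg1}, \eqref{kg2}, \eqref{weightedspherical} equals $\widehat{\chi_{E_{T,c}(A,B)}}(u(\vartheta)\Lambda_0)$. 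Exchanging sum and integral in $\int_0^T h(g_t u(\vartheta)\Lambda_0)\,\dd t$ and observing that, for each $\bv\in u(\vartheta)\Lambda_0\setminus\{0\}$ satisfying the ($g_t$-invariant) conditions of $E_{T,c}(A,B)$, one has $g_t \bv \in S_c(A,B)$ iff $t \in (t_0(\bv)-1,\, t_0(\bv)]$ with $t_0(\bv):=\log\|\by\|_\bb$, a direct bookkeeping yields
\[
\Bigl| N(T,\vartheta) - \int_0^T h(g_t u(\vartheta)\Lambda_0)\,\dd t \Bigr| \;\le\; 2h(u(\vartheta)\Lambda_0) + h(g_T u(\vartheta)\Lambda_0),
\]
the error being contributed solely by lattice points with $t_0(\bv)\in[0,1]\cup[T,T+1]$.

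The main obstacle is bounding $h(g_T u(\vartheta)\Lambda_0)$ by $o(T)$, since pointwise values of an $L^1$ function along an orbit are \emph{not} controlled by its Birkhoff average in general. The trick is the a priori inequality
\[
h(g_T u(\vartheta)\Lambda_0) \;\le\; \int_{T-1}^{T+1} h(g_t u(\vartheta)\Lambda_0)\,\dd t,
\]
valid because every $\bv$ counted in $h(g_T\cdot)$ (so with $t_0(\bv)\in[T,T+1]$) contributes the full interval $(t_0(\bv)-1,t_0(\bv)]\subset[T-1,T+1]$, and hence a full unit to the right-hand integral. Applying Theorem~\ref{thm;main} in the form $\int_0^S h(g_t u(\vartheta)\Lambda_0)\,\dd t = S\int_X h\,\dd\mu + o(S)$, the right-hand side equals $2\int_X h\,\dd\mu + o(T)$, and so $h(g_T u(\vartheta)\Lambda_0) = o(T)$ for a.e.\ $\vartheta$. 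Combining the three displayed estimates gives $N(T,\vartheta)/T \to |S_c(A,B)| = |E_{T,c}(A,B)|/T$, which is the desired asymptotic.
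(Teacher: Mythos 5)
Your proposal is correct and follows essentially the same route as the paper: both identify the count with the number of lattice points of $u(\vartheta)\Lambda_0$ in $E_{T,c}(A,B)$, compare it with the Birkhoff integral of the Siegel transform of the indicator of a slab of bounded thickness (your $S_c(A,B)$ is exactly $E_{1,c}(A,B)$, i.e.\ the paper's $f_{A,B,r,c}$ with $r=1$), invoke Theorem \ref{thm;main} through Lemma \ref{lem: Blichfeldt} and the Siegel integral formula, and use the scaling identity $|E_{T,c}(A,B)|=T\,|E_{1,c}(A,B)|$, which is the paper's \equ{eq;compare}. The only divergence is in how the edge effects are absorbed: the paper's two-sided sandwich \equ{eq;key link} compares the integral with the counts in $E_{T,c}(A,B)\ssm E_{r,c}(A,B)$ and in $E_{T+r,c}(A,B)$, whereas you bound the boundary term directly via $h\big(g_Tu(\vartheta)\Lambda_0\big)\le\int_{T-1}^{T+1}h\big(g_tu(\vartheta)\Lambda_0\big)\dd t=o(T)$ --- an equivalent piece of bookkeeping resting on the same observation that each admissible lattice vector spends time exactly $r$ in the slab.
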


Note that the above counting result does not follow from the
techniques of \cite{s60, S60a}. See also \cite{agt} for some related results. The
reduction of Theorem \ref{thm;diophantine} to Theorem \ref{thm;main} is based on
the observation that the number of
integer solutions  to \equ{kg1}, \equ{kg2} and \equ{weightedspherical} is equal
 to $\sharp \big( u(\vartheta)\Lambda_0\cap E_{T, c}(A, B)\big)$, that
 is, to $\widehat f$,  where $f$ is a certain Riemann integrable
 function on $\R^d$.

 \smallskip
The structure of the paper is as follows: in \S\ref{sec:double} we
prove Theorem \ref{thm;mixing}, and  use it  in  \S\ref{sec:
  effective} to  prove Theorem \ref{thm;main rate}. \S\ref{sec;count}
contains a discussion of Schmidt's asymptotic formula (Theorem
\ref{thm;schmidt use}) and some auxiliary results, which are needed in
the final section of the paper, where Theorems \ref{thm;main} and
\ref{thm;diophantine} are proved.  
 \smallskip
 
\noindent{\bf Acknowledgements.}
The support of grants 
NSFC (11201388), NSFC (11271278),
 ERC starter grant DLGAPS 279893, NSF DMS-1101320 and BSF 2010428 is gratefully
 acknowledged. 
 We would like to thank Manfred Einsiedler for a discussion of
 effective double equidistribution and MSRI for its hospitality during
 Spring 2015.

\section{Effective double equidistribution}\label{sec:double}
In this section we prove Theorem \ref{thm;mixing}. We first recall several facts from the paper
\cite{km12}. Its main result, i.e.\ effective equidistribution of
$g_t$-translates of $U$, can be stated as follows:

\begin{prop}[\cite{km12}, Theorem 1.3]\label{prop;one mixing}
There exists $\delta_1>0$ such that for any $f\in C_c^\infty(M)$,
$\varphi \in C_c^\infty(\Xn)$, and 
for any compact $L\subset \Xn$ there exists $C_1=C_1(f, \varphi, L)$
such that for any $\Lambda\in L$ and  
$t\ge 0$,
\begin{align}
\left|
\int_M f(\vartheta)\varphi\big(g_t u(\vartheta)\Lambda\big)\dd \vartheta
 -\int_M f \dd \vartheta \int_{\Xn} \varphi \dd \mu
\right|
\le C_1 e^{-\delta_1 t}.
\end{align} 
\end{prop}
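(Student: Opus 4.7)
The plan is to deduce this effective equidistribution from quantitative mixing of the $g_t$-action on $X$, via the standard thickening construction. The key analytic input is the exponential decay of matrix coefficients: there exist $\delta_0 > 0$ and a Sobolev-type norm $\mathcal S$ on $C_c^\infty(X)$ such that
\begin{equation*}
\left|\int_X \Psi_1(x)\Psi_2(g_t x)\,d\mu(x)-\int_X \Psi_1\,d\mu\int_X\Psi_2\,d\mu\right|\le e^{-\delta_0 t}\mathcal S(\Psi_1)\mathcal S(\Psi_2)
\end{equation*}
for all $\Psi_1, \Psi_2 \in C_c^\infty(X)$ and $t \ge 0$. For $(G,\Gamma)=(\SL_d(\R),\SL_d(\Z))$ and any nontrivial $\R$-diagonalizable $\{g_t\}$, this is a well-known consequence of the spectral gap of $\Gamma$ in the regular representation on $L^2_0(X,\mu)$ together with Harish-Chandra's bound on $K$-finite matrix coefficients (see e.g.\ \cite{km96}); this is the only place where deep representation theory enters.

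For the thickening step, I would choose a connected subgroup $H\subset G$ complementary to $U$ in the sense that the multiplication map $U\times H\to G$ is a local diffeomorphism at the identity, and such that $g_t$ acts on the Lie algebra $\mathfrak h$ with all non-positive weights, contracting a codimension-zero subspace at exponential rate $e^{-\delta_2 t}$ for some $\delta_2>0$. When $U=U^+$ one may simply take $H=U^- Z(D)$; when $U\subsetneq U^+$ (which happens precisely for non-equal weights), $H$ must include a complement of $\mathfrak u$ inside $\mathfrak u^+$, whose $g_t$-weights are positive, and this residual direction has to be treated by an additional shearing argument as in \cite{km12}. Fix a non-negative $\rho\in C_c^\infty(H)$ with $\int_H \rho\,dh=1$, supported in a neighborhood $V$ of $e_H$ small enough that for every $\Lambda\in L$ the map $(\vartheta,h)\mapsto u(\vartheta)h\Lambda$ from $M\times V$ to $X$ is injective; uniformity in $\Lambda\in L$ follows from a positive lower bound on the injectivity radius over the compact set $L$. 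Pushing the density $f(\vartheta)\rho(h)\,d\vartheta\,dh$ forward via this parametrization yields $\Psi_\Lambda\in C_c^\infty(X)$ with $\int_X\Psi_\Lambda\,d\mu=\int_M f\,d\vartheta$, Sobolev norm $\mathcal S(\Psi_\Lambda)$ bounded uniformly in $\Lambda\in L$ by a constant depending only on $f$, $\rho$ and $L$, and satisfying
\begin{equation*}
\int_X\Psi_\Lambda(x)\varphi(g_tx)\,d\mu(x)=\int_M\int_H f(\vartheta)\rho(h)\,\varphi\bigl(g_tu(\vartheta)h\Lambda\bigr)\,dh\,d\vartheta.
\end{equation*}

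Applying the mixing estimate to the pair $(\Psi_\Lambda,\varphi)$ bounds the right-hand side by $(\int_M f\,d\vartheta)(\int_X\varphi\,d\mu)$ up to an error of size $e^{-\delta_0 t}\mathcal S(\Psi_\Lambda)\mathcal S(\varphi)$. It remains to compare this thickened integral with the target $\int_M f(\vartheta)\varphi(g_t u(\vartheta)\Lambda)\,d\vartheta$. Because $f$ has compact support, $\vartheta$ ranges over a bounded set, and the contraction of $H$ by $g_t$ implies that for $h\in V$ the displacement between $g_t u(\vartheta)h\Lambda$ and $g_t u(\vartheta)\Lambda$ in $X$ is of size $O(e^{-\delta_2 t})$, uniformly for $\vartheta\in\supp f$. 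A mean-value estimate using $\|\varphi\|_{C^1}$ then bounds the thickening error by $O(e^{-\delta_2 t})$. Setting $\delta_1=\tfrac12\min(\delta_0,\delta_2)$ and combining the two estimates produces the claim, with $C_1$ depending on $f$, $\varphi$, $L$ through their Sobolev norms and the injectivity radius.

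The principal obstacle is the effective mixing input itself, which is the nontrivial analytic content (spectral gap plus quantitative Howe--Moore). Given it, the thickening is essentially bookkeeping, except for one genuinely subtle point: when $U\subsetneq U^+$, a naive choice of complementary $H$ contains directions \emph{expanded} by $g_t$, so the mean-value-type control of the thickening error breaks down on those directions; circumventing this is precisely where \cite{km12} requires real work beyond the classical horospherical thickening, and is the reason the statement demands the $D^+$-expanding hypothesis on $U$ rather than merely $U\subset U^+$.
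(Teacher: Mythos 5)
You are attempting to prove a statement that the paper does not prove at all: Proposition \ref{prop;one mixing} is imported verbatim from \cite{km12} (Theorem 1.3 there), so there is no in-paper proof to compare against. Judged on its own merits, your sketch is the classical Margulis thickening argument, and it does prove the result \emph{in the equal-weights case}, where $U$ is the full unstable horospherical subgroup of $D^+$ and one can take $H=U^-Z(D)$ with all $\Ad(g_t)$-weights on $\mathfrak h$ non-positive. That is essentially how the ingredient quoted here as Proposition \ref{prop;injective} (Theorem 2.3 of \cite{km12}) is obtained.

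The gap is that for unequal weights your own hypotheses on $H$ are unsatisfiable, and you say so yourself without repairing it. Any complement of $\mathfrak u$ in $\mathfrak g$ must contain a complement of $\mathfrak u$ inside $\mathfrak u^+$, i.e.\ the root spaces $E_{ij}$ with $i\neq j$ both in the first block (weight $a_i-a_j$) or both in the second block (weight $b_j-b_i$); for non-equal weights some of these are strictly expanded by $g_t$, so the mean-value comparison between $\varphi(g_tu(\vartheta)h\Lambda)$ and $\varphi(g_tu(\vartheta)\Lambda)$ fails on a fixed neighborhood $V$, and the error term $O(e^{-\delta_2 t})$ does not exist. Deferring this to ``an additional shearing argument as in \cite{km12}'' both leaves the main difficulty unaddressed and mischaracterizes what \cite{km12} actually does. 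The real argument there is structurally different: write $g_t=g_t'g_t''$ with $g_t'$ the equal-weights flow and $g_t''$ having non-negative exponents in the relevant positions, so that $g_tu(\vartheta)\Lambda=g_{t}'\,u(\vartheta'')\,g_t''\Lambda$ with the base point $g_t''\Lambda$ now escaping every compact set; then prove the equal-weights effective equidistribution with \emph{explicit} dependence on the injectivity radius at the base point (Proposition \ref{prop;injective}), and control the measure of the set of $\vartheta$ for which that injectivity radius is small via quantitative non-divergence (Proposition \ref{prop;quantitative}), after localizing $f$ to balls of radius $r$ chosen as a function of $t$. This balancing of $r$ against $t$ and the non-divergence input are exactly the pieces missing from your sketch, and they are also the pieces the present paper reuses in its proof of Theorem \ref{thm;mixing}; so if you want a self-contained proof of Proposition \ref{prop;one mixing}, you should model it on that argument rather than on the horospherical thickening alone.
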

Let  
$$b = \min \left\{\frac{a_i}{{\n}}, \frac{b_j}{{\m}} : 1\le i\le {\m},\ 1\le j\le
  {\n}\right \},
$$ 
and let 
\begin{align}
g_t'=\left (
\begin{array}{cc}
e^{{\n}bt}1_{\m} & 0 \\
0 & e^{-{\m}bt} 1_{\n} 
\end{array}
\right),
\quad
g_t''=g_tg'_{-t}.
\end{align}
Note that $ g_t'$ (after reparametrization) corresponds to the case of
equal weights; in particular, $U$ is the unstable horospherical
subgroup relative to $\left\{g_t': t\ge 0 \right\}$.  On the other hand $ g_t''$  
is of the form
$$\mathrm{diag}\left(e^{a'_1t}, \ldots, e^{a'_{\m}t},
  e^{-b'_1t},\ldots, e^{-b'_{\n}t} \right )
$$
where $a_i'$ and $b_j'$ are nonegative.

Let $B_r^G$ be the ball of radius $r$ centered at the identity element
of $G$ with respect to the metric induced by some right invariant 
Riemannian metric. For $\Lambda\in \Xn$ we let 
$\pi_\Lambda: G\to \Xn$ be the map $g\mapsto g\Lambda$, and define
\[
\mathrm{Inj}_\varepsilon:=\left\{\Lambda\in \Xn:
  \pi_\Lambda|_{B_\varepsilon^G} \mbox{ is injective}\right \}. 
\]
Also let $B_r$ denote  the ball of radius $r$ centered at zero with respect to 
the Euclidean norm on $M\cong \mathbb R^{mn}$. 
The following quantitative non-divergence result is a combination of \cite[Corollary 3.4]{km12}  
and \cite[Proposition 3.5]{km12}:
\begin{prop}\label{prop;quantitative}
Let $L\subset \Xn$ be compact and let $r>0$. There exists 
$t_0=t_0(L, r)>0$ and 
$C_3>0$ such that for every 
$0<\varepsilon<1$, $\Lambda\in L$, $s\ge 0$ and $t\geq t_0$ one has
\[
\left| \left\{\vartheta\in B_r: g_s''g_t u(\vartheta)\Lambda\not \in
    \mathrm{Inj}_\varepsilon \right\} \right|\le 
C_3\varepsilon ^ {{1}/{d^4}} \left|B_r \right|.
\]
\end{prop}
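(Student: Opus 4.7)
The plan is to derive this quantitative non-divergence bound by combining the two cited ingredients from \cite{km12} within the Kleinbock--Margulis framework for polynomial-like maps. The underlying observation is that for fixed $s, t, \Lambda$, the coordinates of $g_s'' g_t u(\vartheta) \Lambda$, and more generally of wedge products $g_s'' g_t u(\vartheta)(v_1 \wedge \cdots \wedge v_k)$ for $v_i$ ranging over vectors of $\Lambda$, are polynomials in $\vartheta \in M$ whose degree depends only on $d$. Hence they form a $(C, \alpha)$-good family with constants uniform in $s, t, \Lambda$, which is precisely the input required to run non-divergence.

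The scheme would proceed in two stages. First, using Proposition 3.5 of \cite{km12}, one shows that for $t \geq t_0(L, r)$ and $\Lambda \in L$, there exists $\vartheta^\star \in B_r$ at which the lattice $g_t u(\vartheta^\star) \Lambda$ has no subgroup of abnormally small covolume. The mechanism is that for $t$ sufficiently large the $g_t$-translate stretches the $u$-orbit through $\Lambda$ enough to force it to visit a uniformly thick region of $\Xn$, giving the non-degeneracy condition required by Kleinbock--Margulis. Second, Corollary 3.4 of \cite{km12} promotes this pointwise non-degeneracy (tracked through the additional $g_s''$-translation) into a measure estimate on the whole ball $B_r$, yielding the bound $C_3 \varepsilon^{1/d^4} |B_r|$ via the standard Kleinbock--Margulis iteration across the covolume functions of all subgroups of rank $1 \leq k \leq d-1$ of $\Lambda$.

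The main obstacle, and the reason both ingredients from \cite{km12} are needed, is uniformity in both $s \geq 0$ and $t \geq t_0$. The variable $s$ enters only through the diagonal rescaling by $g_s''$, which is a product of nonnegative exponentials along the $\ba$-block and nonpositive exponentials along the $\bb$-block; consequently $g_s''$ cannot decrease the covolume of any subgroup of $g_t u(\vartheta^\star)\Lambda$ below a uniform multiple of what it was at $s = 0$, so the non-degeneracy at $\vartheta^\star$ is preserved uniformly in $s$. Propagating this single-point estimate to the full ball $B_r$, uniformly in $s$, is what Corollary 3.4 of \cite{km12} is designed to handle. The exponent $1/d^4$ (rather than $1/d$) then emerges naturally: since the iteration runs over all subgroup ranks $k$, with each step losing a factor tied to the degree and number of coordinates of the corresponding wedge-product polynomials, the cumulative loss scales as a power of $d$ in the exponent.
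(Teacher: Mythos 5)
The paper offers no argument of its own here: Proposition \ref{prop;quantitative} is stated as exactly the combination of \cite[Corollary 3.4]{km12} and \cite[Proposition 3.5]{km12} that you invoke, so at the level of strategy you agree with the paper. The problem is your justification of the uniformity in $s$, which you yourself single out as the main obstacle. The claim that $g_s''$ ``cannot decrease the covolume of any subgroup of $g_t u(\vartheta^\star)\Lambda$ below a uniform multiple of what it was at $s=0$'' is false: $g_s''=\mathrm{diag}(e^{a_1's},\dots,e^{a_m's},e^{-b_1's},\dots,e^{-b_n's})$ with $a_i',b_j'\ge 0$, so it genuinely contracts the lower block; for instance a rank-one subgroup of $g_tu(\vartheta^\star)\Lambda$ generated by a vector concentrated in the last $n$ coordinates has its covolume multiplied by a factor on the order of $e^{-b_j's}$, which tends to $0$ as $s\to\infty$ whenever $g''$ is nontrivial. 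Consequently a single ``thick'' point $\vartheta^\star$ for $g_tu(\cdot)\Lambda$ does not remain thick after applying $g_s''$, and your second stage has nothing uniform in $s$ to propagate.

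What actually makes the combination work is different. The input required by the measure estimate of \cite[Corollary 3.4]{km12} is, for every primitive subgroup $\Gamma\subset\Lambda$ of rank $k<d$, a lower bound $\sup_{\vartheta\in B_r}\|g_s''g_tu(\vartheta)v_\Gamma\|\ge\rho$ with $\rho$ independent of $s$, $t$ and $\Lambda\in L$ (the maximizing $\vartheta$ may depend on $\Gamma$; no single $\vartheta^\star$ is needed), and \cite[Proposition 3.5]{km12} supplies precisely this, uniformly in $s\ge 0$ and $t\ge t_0(L,r)$. The $s$-uniformity is structural rather than a monotonicity of covolumes: varying $\vartheta$ over $B_r$ shears $v_\Gamma$ so that a definite proportion of its norm lies on wedge coordinates $e_I$ whose index set contains as many indices from $\{1,\dots,m\}$ as possible, and $t\ge t_0$ makes these coordinates dominant because their $g_t$-weights are positive; on exactly these coordinates the $g_s''$-weights $\sum_{i\in I,\, i\le m}a_i'-\sum_{j\in I,\, j>m}b_{j-m}'$ are nonnegative, by the choice of $b$ in the definition of $g''$ (note $\sum_i a_i'=\sum_j b_j'=1-mnb$). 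Thus $g_s''$ cannot destroy the lower bound on the supremum even though it can shrink individual subgroups at individual points $\vartheta$. Your proposal is missing this point, and without it the asserted uniformity in $s$ --- and hence the proposition --- does not follow from the two cited results as you have combined them.
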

Here and hereafter for a measurable subset $S$ of a Euclidean space we use 
$|S|$ to denote the Lebesgue measure of $S$.  
For $h\in C_c^\infty(M)$ and a nonnegative 
integer $k$ we define the $k$-th Sobolev norm of $h$ to be 
\[
\|h\|_k=\max_{|\beta|\le k} \left\|\partial ^\beta h \right\|_{L^2(M)}
\]
where $|\beta|$ is the order of the multi-index $\beta$.

We will also need the following effective equidistribution estimate for the  $g_t'$-action:
 
\begin{prop}[\cite{km12}, Theorem 2.3]\label{prop;injective}
There exist $\delta_2, r_0 > 0$ and $k\in \mathbb N$ with the
following property:  for any $\psi\in C_c^\infty(\Xn)$ 
there exists $C_2 > 0$
such that for any $0<r<r_0$, any $ h\in C_c^\infty (M)$ with
$\mathrm{supp}(h)\subset B_r$, any $\Lambda\in 
\mathrm{Inj}_{2r}$  and any $t\ge 0$ one has
 \[
 \left|
 \int_M h(\vartheta)\psi\big(g_t'u(\vartheta)\Lambda
\big )\dd\vartheta
-\int_M h\int_{\Xn}\psi 
 \right|\le
  C_2\left(r^{-k}\|h\|_ke ^{-\delta_2 t}+
  r\int_M |h|
  \right).
 \]
\end{prop}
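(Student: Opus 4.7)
The plan is to prove this by the classical \emph{thickening plus mixing} technique for effective equidistribution of expanding translates of full horospherical orbits. This method applies here because $g_t'$ is, after a trivial reparametrization, an equal-weights diagonal flow, and $U$ is its full unstable horospherical subgroup. The analytic input is exponential decay of matrix coefficients for the $g_t'$-action on $\Xn$: there exist $\delta>0$ and $k_0\in\N$ such that for all $\Phi_1,\Phi_2\in C_c^\infty(\Xn)$ and $t\ge 0$,
$$\bigl|\langle g_t'\Phi_1,\Phi_2\rangle_{L^2(\Xn)}-\int_\Xn\Phi_1\dd\mu\int_\Xn\Phi_2\dd\mu\bigr|\le C\|\Phi_1\|_{k_0}\|\Phi_2\|_{k_0}e^{-\delta t},$$
which is a consequence of the spectral gap for $\SL_d(\Z)$ on $L_0^2(\Xn)$ and Sobolev embedding.

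To set up the thickening, let $V$ be the stable horospherical subgroup of $g_t'$ and $Z$ its centralizer in $G$, so that the product map $U\times V\times Z\to G$ is a local diffeomorphism at $e$. I would pick bumps $\rho\in C_c^\infty(V)$ and $\sigma\in C_c^\infty(Z)$ supported in $r$-balls around $e$ with $\int\rho=\int\sigma=1$. The hypothesis $\Lambda\in\mathrm{Inj}_{2r}$ makes the map $(\vartheta,v,z)\mapsto u(\vartheta)vz\Lambda$ injective on $B_r\times V_r\times Z_r$, so I can define $\Phi\in C_c^\infty(\Xn)$ by $\Phi(u(\vartheta)vz\Lambda):=h(\vartheta)\rho(v)\sigma(z)/J(v,z)$, with $J$ the Jacobian relating the coordinate product measure to Haar measure on $G$. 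Standard computations give $\int_\Xn\Phi\dd\mu=\int_M h\dd\vartheta$ and $\|\Phi\|_{k_0}\le Cr^{-k}\|h\|_{k_0}$ for some $k=k_0+\tfrac{1}{2}(\dim V+\dim Z)+O(1)$, reflecting the $r^{-1}$ cost per derivative of $\rho$ and $\sigma$.

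Unfolding the matrix coefficient $\langle g_t'\Phi,\psi\rangle$ via the injective parametrization and applying mixing yields
$$\int_M\!\!\int_{V\times Z}h(\vartheta)\rho(v)\sigma(z)\psi\bigl(g_t'u(\vartheta)vz\Lambda\bigr)\dd\vartheta\dd v\dd z=\int_M h\dd\vartheta\int_\Xn\psi\dd\mu+O\bigl(r^{-k}\|h\|_{k_0}e^{-\delta t}\bigr),$$
where the constant absorbs $\|\psi\|_{k_0}$. It then remains to compare this thickened integral with the target integral $\int_M h(\vartheta)\psi(g_t'u(\vartheta)\Lambda)\dd\vartheta$, i.e.\ to show that the difference is $O(r\int|h|)$.

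The main obstacle lies in this last comparison. Writing $g_t'u(\vartheta)vz\Lambda=\omega_{t,\vartheta}(vz)\cdot g_t'u(\vartheta)\Lambda$ with $\omega_{t,\vartheta}(vz):=g_t'u(\vartheta)vzu(\vartheta)^{-1}g_t'^{-1}$, an explicit block-matrix calculation shows that the $V$- and $Z$-components of $\omega_{t,\vartheta}(vz)-e$ remain $O(r)$ for all $t\ge 0$; however the commutator $[u(\vartheta),vz]$ contributes a $U$-part of order $O(|\vartheta|\cdot(|v|+|z|))$ that is amplified by $e^{dbt}$ under $g_t'$-conjugation (where $db=(m+n)b$ is the expansion rate of $U$ under $g_t'$). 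Consequently the naive Lipschitz estimate on $\psi$ yields the desired $O(r)$ displacement (and hence the $O(r\int|h|)$ thickening error) only for $t$ below a threshold $T^*$ of order $\log(1/r)$. For $t>T^*$ I would rebalance: choosing $\delta_2<\delta$ small enough ensures that on this regime the mixing contribution $r^{-k}\|h\|_{k_0}e^{-\delta_2 t}$ dominates the crude bound $\|\psi\|_\infty\int|h|$ on the thickening difference, so that the stated two-term estimate $C_2(r^{-k}\|h\|_k e^{-\delta_2 t}+r\int|h|)$ holds uniformly in $t\ge 0$.
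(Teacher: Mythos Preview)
The paper does not prove this proposition; it is quoted from \cite{km12} (Theorem~2.3), with the remark that the precise form stated here follows from that result by enlarging $k$. Your proposal is the standard thickening-and-mixing argument underlying \cite{km12}, so the overall strategy is correct. However, your execution contains a self-inflicted obstacle and a genuine gap in the workaround you propose for it.

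The obstacle comes from your choice of parametrization order. You thicken via $(\vartheta,v,z)\mapsto u(\vartheta)vz\Lambda$, and then the displacement $\omega_{t,\vartheta}(vz)=g_t'u(\vartheta)vzu(\vartheta)^{-1}g_{-t}'$ indeed acquires a $U$-component (of size $O(r^2)$, coming from the commutator of $u(\vartheta)$ with $v$ and $z$) that is expanded by $g_t'$. But this problem disappears if you parametrize in the order $(\vartheta,v,z)\mapsto vz\,u(\vartheta)\Lambda$ instead. Then
\[
g_t'\,vz\,u(\vartheta)\Lambda=\bigl(g_t'vg_{-t}'\bigr)\,z\cdot g_t'u(\vartheta)\Lambda,
\]
and since $v\in V$ is $g_t'$-contracted and $z\in Z$ commutes with $g_t'$, the left factor $\bigl(g_t'vg_{-t}'\bigr)z$ stays within $O(r)$ of the identity for \emph{all} $t\ge 0$. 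The Lipschitz estimate on $\psi$ then gives the thickening error $O\bigl(r\int_M|h|\bigr)$ uniformly in $t$, with no regime splitting required. The injectivity hypothesis $\Lambda\in\mathrm{Inj}_{2r}$ and the Sobolev bound $\|\Phi\|_{k_0}\ll r^{-k}\|h\|_{k_0}$ work identically for this ordering.

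As for your rebalancing argument in the regime $t>T^*$: it does not work as written. You claim that for suitably small $\delta_2$ the term $r^{-k}\|h\|_k e^{-\delta_2 t}$ dominates the crude bound $\|\psi\|_\infty\int_M|h|$ on the thickening difference. But the latter is a positive constant independent of $t$, while the former tends to $0$ as $t\to\infty$; no choice of $\delta_2>0$ can make the inequality hold for all large $t$. So the two-term estimate is not established in that regime by your argument. The clean fix is simply to reorder the factors as above, which removes the need for any such rebalancing.
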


Note that the statement is not precisely the one which appears in
\cite{km12} but can be deduced from it by taking $k$ large enough.

\begin{proof}[Proof of Theorem \ref{thm;mixing}] Recall that we are
  given a compact subset  $L$  of $\Xn$, $ \varphi,\psi \in
  C_c^\infty(\Xn) $ and  $f\in C_c^\infty (M)$. 
In this proof, the notation 
$y\ll x$  will mean that $y \leq Cx$ where $C$ is a
constant independent of $x$, but which could depend on $f, \varphi,
\psi$ or $L$.

 Let $\delta_0=\min(\delta_1, \delta_2)$ where $\delta_1$
and $\delta_2$ are given 
in Propositions \ref{prop;one mixing} and \ref{prop;injective} respectively. 
We also fix $k\in \mathbb N$ so that Proposition \ref{prop;injective} holds. 
Without loss of generality we assume that $k\ge mn$. Now let 
 $$a:=\min\{a_i+b_j: 1\le i\le  {\m}, 1\le j\le {\n} \}$$
and 
\eq{eq: choice of delta}{
\delta:=\min \left( \frac{\delta_0}{2(1+3k)}, \frac{a}{2}\right).
}
Let 
 $\Delta$,   $\Lambda$,   $ t$ and   ${w}$ be as in the statement of
Theorem \ref{thm;mixing}.  
Put 
$$r:=e^{-\delta |{w}-t|} \ \text{ and }  s:=\frac{t + {w}}{2}.$$
We can assume with no loss of generality
that ${w} \geq t \geq t_0$, where $t_0$ is as in Proposition
\ref{prop;quantitative}, and moreover that $r< r_0$, where $r_0$ is as in
 Proposition \ref{prop;injective}. 
According to \cite[Lemma 2.2]{km12}
there exists $h\in C_c^\infty (M)$ such that supp$(h)\subset B_r$, $h\ge 0$,
$\int_M h (\vartheta)\dd \vartheta=1$ and $\|h\|_k\ll r^{-2k}$.  
Given $\zeta\in M$, we define  $\zeta_1, \zeta_2\in M$ by the formulae
\[
u(\zeta_1): =g_{t-s}''g_{-s}u(\zeta) g_{s}g_{s-t}'', \quad
u(\zeta_2): =g_{t-s}''g_{t-s}u(\zeta) g_{s-t}g_{s-t}'',
\]
which imply 
\eq{eq: with}{
g_t u(\zeta_1) = u(\zeta_2) g_t \ \text{ and } \  g_{{w}} u(\zeta_1) =
g_{s-t}' u(\zeta) g_s g''_{s-t}. 
}

Let $I_{\Delta,\Lambda{,f,\varphi,\psi}}(t, {w})$ be as in \equ{eq;mixing goal}. Then, 
{using Fubini's theorem 
and, for each $\zeta$, making a change of variables 
 $\vartheta\mapsto \vartheta+\zeta_1$, 
we find in view of \equ{eq: with} 
that 
\begin{equation}
\begin{aligned}
&I_{\Delta,\Lambda{,f,\varphi,\psi}}(t, {w})=  \int_M  f(\vartheta) \varphi\big(g_tu(\vartheta)\Delta\big)
\psi\big(g_{w} u(\vartheta)\Lambda\big)  \dd \vartheta \int_M h (\zeta)\dd \zeta\\ =&
{\int_M \int_M} \  f(\vartheta+\zeta_1)
\varphi\big(u(\zeta_2)g_t u(\vartheta)\Delta\big)\psi\big(g_{s-t}'u(\zeta)g''_{s-t}g_s u(\vartheta)\Lambda\big)
h(\zeta)\dd \vartheta\dd \zeta.
\label{eq;q 2}
\end{aligned}
\end{equation}
}

Note  that for all $\zeta\in \mathrm{supp}(h)$ we have 
  $$\|\zeta_1\| \ll  e^{-sa} \|\zeta\| \ll
  e^{-\frac{{w}-t}{2}a} \stackrel{\equ{eq: choice of delta}}{\ll}
  e^{-\delta({w} -t)} ,$$ 
and similarly  $\|\zeta_2\| \ll e^{-\delta({w}-t)}$. 
Denote
$$
{\Psi}(\vartheta) := \int_M \psi\left(g_{s-t}'u(\zeta)g_{s-t}''g_s u(\vartheta)\Lambda\right)
h(\zeta) \dd \zeta.
$$
Then, by approximating the function $f(\vartheta+\zeta_1)
\varphi\big(u(\zeta_2)g_t u(\vartheta)\Delta\big)$ by $f(\vartheta) 
\varphi\big(g_t u(\vartheta)\Delta\big)$ in (\ref{eq;q 2}) we 
get
\begin{align}
\left|
I_{\Delta,\Lambda{,f,\varphi,\psi}}(t, {w})
- 
\int_M 
{\Psi}(\vartheta)
 f(\vartheta) \varphi\big(g_t u(\vartheta)\Delta\big)\dd \vartheta\right| \ll
e^{-\delta({w}-t)} 
.
\label{eq;effective 1}
\end{align}

Let $r_1>0$ be such that $\mathrm{supp} \, f \subset B_{r_1}$, let 
$\varepsilon=e^{-\delta ({w}- t)d^4}$, and {denote}
 $$E:=\{\vartheta\in B_{r_1}: g_{s-t}''g_s
 u(\vartheta)\Lambda\in \mathrm{Inj}_\varepsilon\}.$$  
It follows from 
Proposition \ref{prop;quantitative} that  
 \begin{align*} 
|B_{r_1}\ssm  E|\ll e^{-\delta({w}-t)}|B_{r_1}|,
\end{align*}
and hence 
\begin{align}\label{eq;effective 11}
\left |
\int_{B_{r_1}\ssm E}{\Psi}(\vartheta)f(\vartheta)\varphi\big(g_t u(\vartheta )\Delta\big)\dd \vartheta
\right|
\ll e^{-{\delta}({w}-t)}.
\end{align}
By Proposition \ref{prop;injective} 
for $\vartheta\in E$ (with $g_{s-t}'' g_s u(\vartheta)
\Lambda$ in place of $\Lambda$) one has
$${
\left| {\Psi}(\vartheta) - \int_{\Xn} \psi 
\right |
\ll r + r^{-k}
  \|h\|_k e^{-\delta_2(s-t)}
\ll e^{-\delta({w}- t)}+e^{-(\delta_0/2-3k\delta){({w}-t) } }
,
}
$$
{which, in view of 
\equ{eq: choice of delta} and  (\ref{eq;effective 11}), implies} 
\begin{equation}
\label{eq;effective 2}
{
 \begin{aligned}
\left | \int_{M}{\Psi}(\vartheta)f(\vartheta)\varphi\big(g_t u(\vartheta )\Delta\big)\dd \vartheta  
 -  \int_{M}f(\vartheta)\varphi\big(g_t u(\vartheta )\Delta\big)\dd \vartheta\int_{\Xn}\psi
\right|
\\
 \ll
e^{-{\delta({w}-t)}}+e^{-\frac{\delta_0({w}-t) } {2-3k\delta} }
\ll e^{-\delta({w}-t)}
. \qquad\qquad\qquad
\end{aligned}
}
\end{equation}
On the other hand {from Proposition \ref{prop;one mixing} 
one gets}
\begin{align}\label{eq;effective 3}
\left|
\int_{M}f(\vartheta)\varphi\big(g_t u(\vartheta )\Delta\big)\dd \vartheta
-\int_M
f\int_{\Xn}\varphi
\right|\ll
e^{-\delta_0 t} 
.
\end{align}
Combining (\ref{eq;effective 1}), (\ref{eq;effective 2})   and (\ref{eq;effective 3}), one arrives at (\ref{eq;mixing rate}).
\end{proof}

\section{Pointwise equidistribution with an error rate}
\label{sec: effective}
In this section we prove Theorem \ref{thm;main rate}.  
The method works in a general framework as follows:
\begin{thm}\label{thm;effective}
Let $(Y, \nu)$  be a probability space, and let $F:Y\times \R_+\to \R$ be a bounded measurable function.
Suppose  there exist $\delta >0$ and $C>0$ such that for any $w
\ge t \ge 0$,
\begin{align}\label{eq;effective rate}
\left |
\int_Y{F(x,t)}{F(x,w)}\dd \nu( x)
\right|
\le Ce^{-\delta \min(t, {w}-t)}.
\end{align}
Then given $\varepsilon >0$ we have 
\begin{align}\label{eq;effective goal}
\frac{1}{T}\int_0^T {F(y,t)}\dd t=o
(T^{-1/2}\log^{{\frac32+\varepsilon} } T)
\end{align}
for $\nu$-almost every ${y}\in Y$. 
\end{thm}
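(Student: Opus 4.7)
The plan is to control $S(y,T) := \int_0^T F(y,t)\,\dd t$ by extracting an $L^2$ variance estimate from the correlation hypothesis \equ{eq;effective rate}, and then combining a Borel--Cantelli argument along a dyadic subsequence with a Rademacher--Menshov type maximal inequality to handle the gaps, in the spirit of Cassels, Schmidt \cite{s60} and \cite{Gap}.

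First I would use Fubini's theorem together with \equ{eq;effective rate} to estimate, for $0\le a\le b$,
\[
\int_Y \bigl(S(y,b)-S(y,a)\bigr)^2\,\dd\nu(y) \le 2\int_a^b\int_a^w Ce^{-\delta\min(t,\,w-t)}\,\dd t\,\dd w \le C_0(b-a)
\]
(for $b\le 2a$ this is immediate from the inner integral bound $\int e^{-\delta\min(t,w-t)}\dd t\le 2/\delta$; for $b>2a$ one reduces to the previous case via $\|S(\cdot,b)-S(\cdot,a)\|_2\le\|S(\cdot,b)\|_2+\|S(\cdot,a)\|_2$). Setting $T_n=2^n$, Chebyshev's inequality gives
\[
\nu\bigl(|S(y,T_n)| > T_n^{1/2}\,n^{(1+\varepsilon)/2}\bigr) \le \frac{C_0}{n^{1+\varepsilon}},
\]
and Borel--Cantelli yields $|S(y,T_n)| = o\bigl(T_n^{1/2}(\log T_n)^{(1+\varepsilon)/2}\bigr)$ for $\nu$-a.e.\ $y$ along this dyadic subsequence.

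To handle $T\in[T_n,T_{n+1}]$ I would partition this interval by the unit-spaced points $s_j=T_n+j$, $j=0,1,\ldots,2^n$, and apply the Rademacher--Menshov maximal inequality in its Gal--Koksma form, valid because the variance estimate from the first step controls increments by $\|S(\cdot,s_k)-S(\cdot,s_j)\|_2^2 \le C_0(k-j)$. This yields
\[
\int_Y \max_{0\le k\le 2^n}\bigl|S(y,s_k)-S(y,T_n)\bigr|^2\,\dd\nu \le C_1\,n^2\,T_n.
\]
Another Chebyshev plus Borel--Cantelli step, with threshold $T_n^{1/2}n^{(3+\varepsilon)/2}$, gives $\max_k|S(y,s_k)-S(y,T_n)| = o\bigl(T_n^{1/2}n^{(3+\varepsilon)/2}\bigr)$ a.s. Since $F$ is bounded, $|S(y,T)|$ differs from $|S(y,s_k)|$ (for the closest $s_k\le T$) by $O(1)$. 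Combining with the dyadic bound produces $|S(y,T)| = o\bigl(T^{1/2}(\log T)^{3/2+\varepsilon}\bigr)$ a.s., which upon dividing by $T$ is exactly \equ{eq;effective goal}.

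The main obstacle is the maximal inequality in the gap-filling step: the classical Menshov--Rademacher inequality assumes orthogonality of the summands, whereas here they are only weakly correlated. The standard remedy is a dyadic chaining argument, which I would execute as follows: for each $k\in\{1,\ldots,2^n\}$, write $k$ in binary so that $S(y,s_k)-S(y,T_n)$ telescopes as a sum of at most $n$ dyadic block-increments, each of which has variance controlled by the first step; applying Cauchy--Schwarz across the $n$ terms and then taking a union bound over the $2^n$ endpoints produces the factor $n^2$ in the maximal inequality. This factor, of order $(\log T_n)^2$, is precisely what upgrades the exponent from $\tfrac12+\varepsilon$ for the dyadic subsequence to $\tfrac32+\varepsilon$ for the continuous-parameter statement.
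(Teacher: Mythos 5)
Your proposal is correct and is essentially the paper's own argument: the paper's Lemmas \ref{lem;effective help}--\ref{lem;effective key} consist of exactly your $L^2$ increment bound $\int_Y\bigl(\int_b^c F(x,t)\dd t\bigr)^2\dd\nu \ll c-b$ followed by the Cassels--Schmidt dyadic chaining (Cauchy--Schwarz over the at most $s$ dyadic intervals of $L_s$ covering $[0,k]$, then Markov/Chebyshev and Borel--Cantelli), which is the same mechanism as the Gal--Koksma/Rademacher--Menshov maximal inequality you invoke; the only difference in packaging is that the paper covers $[0,k]$ directly by dyadic intervals, while you split into the subsequence $T_n=2^n$ plus an oscillation estimate on $[T_n,T_{n+1}]$, and both routes yield the same exponent $\tfrac32+\varepsilon$. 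One phrase to repair: the second logarithmic factor in your maximal inequality comes from dominating the maximum by the sum of squared increments over \emph{all} dyadic blocks at each of the $\sim n$ scales (as in Lemma \ref{lem;effective uniform}), not from a probabilistic union bound over the $2^n$ endpoints --- a literal union bound at your threshold $T_n^{1/2}n^{(3+\varepsilon)/2}$ gives a non-summable series --- but since the Gal--Koksma form you cite is proved exactly by that scale-by-scale domination and needs only the variance bound (no orthogonality), the step and hence the whole proof go through.
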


We begin with some lemmas. In the statements below the notation and
assumptions are as in Theorem \ref{thm;effective}. 

\begin{lem}\label{lem;effective help}
 Let $[b,c]$ be a closed 
interval in $[0,\infty)$. Then 
\[
\int_Y \left(
\int_b^c{F(x,t)}\dd t
\right)^2 \dd \nu(x)
\le { 4C\delta ^{-1}(c-b)}.
\]
\end{lem}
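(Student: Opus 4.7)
The plan is to expand the square, invoke Fubini, apply the decorrelation hypothesis \equ{eq;effective rate}, and then estimate the resulting purely deterministic double integral of $e^{-\delta \min(t, w-t)}$ over the triangle $\{b \le t \le w \le c\}$.

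More concretely, I would write
\begin{align*}
\int_Y \left(\int_b^c F(x,t)\dd t\right)^2 \dd\nu(x) = \int_b^c \int_b^c \int_Y F(x,t)F(x,w)\dd\nu(x)\dd t\dd w,
\end{align*}
and then exploit the symmetry of $F(x,t)F(x,w)$ under $t \leftrightarrow w$ to restrict to $w \ge t$ at the cost of a factor of $2$. Plugging in the hypothesis on the now-ordered pair $(t,w)$ yields
\begin{align*}
\int_Y \left(\int_b^c F(x,t)\dd t\right)^2 \dd\nu(x) \le 2C\int_b^c \int_t^c e^{-\delta\min(t, w-t)}\dd w\dd t.
\end{align*}

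For the remaining deterministic double integral, the key elementary observation is
\begin{align*}
e^{-\delta\min(t,w-t)} = \max\!\left(e^{-\delta t}, e^{-\delta(w-t)}\right) \le e^{-\delta t} + e^{-\delta(w-t)}.
\end{align*}
The summand $e^{-\delta(w-t)}$, integrated in $w$ over $[t,c]$, contributes at most $\delta^{-1}$, and hence at most $\delta^{-1}(c-b)$ after integrating in $t$. The summand $e^{-\delta t}$ contributes $(c-t)e^{-\delta t}$ after the trivial $w$-integration, and a single integration by parts in $t$ gives $\int_b^c (c-t)e^{-\delta t}\dd t \le (c-b)\delta^{-1}e^{-\delta b} \le \delta^{-1}(c-b)$. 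Adding the two contributions and multiplying by $2C$ produces the claimed bound $4C\delta^{-1}(c-b)$.

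No step here presents any real obstacle: once the hypothesis is applied the argument reduces to a one-dimensional calculus estimate. The only point that requires a little attention is to resist the temptation to bound the inner correlation trivially by its supremum on a neighborhood of the diagonal, since doing so would produce a bound proportional to $(c-b)^2$ rather than the linear-in-$(c-b)$ bound that will be needed in the subsequent Borel--Cantelli style deduction of Theorem \ref{thm;effective}.
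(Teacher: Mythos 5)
Your proposal is correct and follows essentially the same route as the paper: Fubini, the symmetry reduction to $w\ge t$ with a factor of $2$, the bound $e^{-\delta\min(t,w-t)}\le e^{-\delta t}+e^{-\delta(w-t)}$, and then the elementary one-dimensional estimates giving $4C\delta^{-1}(c-b)$. Your closing remark about avoiding the trivial $(c-b)^2$ bound is exactly the point of the lemma, and your integration by parts is a harmless variant of the paper's cruder bound $(c-t)\le(c-b)$.
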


\begin{proof} The left hand side can be written as
\begin{align}
  & \int_Y \left(
\int_b^c{F(x,t)}\dd t\right)^2 \dd \nu(x)  \notag\\
 = &\int_b^c \int _b^c  \left
 [\int_Y{F(x,t)}{F(x,w)} \dd\nu (x)\right]\dd {w} 
\dd t  &&\mbox{by Fubini}\notag\\
 \le & 2\int_b^c \int_t^c  \left |\int_Y{F(x,t)}
{F(x,w)} \dd\nu (x)\right|\dd {w} \dd t  \notag\\
 \le& 2C\int_b^c  \left [\int_t^{c}
(  e^{-\delta ({w}-t)}+
   e^{-\delta t})\dd {w}  
 \right]\dd t  &&\mbox{by (\ref{eq;effective rate})} \notag  \\
  \le  & 2C\int_b^c \big({\delta^{-1} }+e^{-\delta
    t}(c-t )\big)\dd t \notag\\
\leq  & 2C\int_b^c \big({\delta^{-1} }+e^{-\delta
    t}(c-b )\big)\dd t
\    \le\ 
    {4C\delta^{-1}(c-b)}, \notag
\end{align}
and the proof is finished.
\end{proof}

For a positive integer $s$ we let $L_s$ be the set of intervals of the form 
$[2^i j, 2^i(j+1)]$
where $i, j$ are nonnegative integers and $2^i(j+1)<2^s$.

\begin{lem}\label{lem;effective uniform}
One has 
\begin{align}
\sum_{[b, c]\in L_s} 
\int_Y 
\left(
\int_b^c{F(x,t)}\dd t
\right)^2 
 \dd\nu(x)\le 4C \delta^{-1} s2^s.
\end{align}
\end{lem}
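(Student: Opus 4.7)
The plan is to decompose the sum over $L_s$ according to the dyadic level $i$, apply Lemma \ref{lem;effective help} to each interval, and then count levels.

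First I would fix $i \ge 0$ and consider the sub-collection $L_s^{(i)} \subset L_s$ consisting of intervals of the form $[2^i j, 2^i(j+1)]$ with $j$ a nonnegative integer satisfying $2^i(j+1) < 2^s$. Each such interval has length exactly $c-b = 2^i$, and the constraint $j+1 \le 2^{s-i}$ shows that $|L_s^{(i)}| \le 2^{s-i}$. Moreover, once $i \ge s$ there are no intervals at all, so only $i = 0, 1, \ldots, s-1$ contribute.

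Next I would apply Lemma \ref{lem;effective help} to every $[b,c] \in L_s^{(i)}$, which gives
\begin{equation*}
\int_Y \left(\int_b^c F(x,t) \dd t\right)^2 \dd\nu(x) \le 4C\delta^{-1}(c-b) = 4C\delta^{-1} \cdot 2^i.
\end{equation*}
Summing over the at most $2^{s-i}$ intervals in $L_s^{(i)}$, the contribution of level $i$ is at most $4C\delta^{-1} \cdot 2^i \cdot 2^{s-i} = 4C\delta^{-1} \cdot 2^s$, a bound independent of $i$.

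Finally, summing over the $s$ admissible levels $i = 0, 1, \ldots, s-1$ yields the desired bound $4C\delta^{-1} s 2^s$. There is no real obstacle here; the only point worth checking is that the counting $|L_s^{(i)}| \le 2^{s-i}$ together with the level range $0 \le i \le s-1$ is tight enough to produce exactly the claimed factor $s 2^s$, which it is.
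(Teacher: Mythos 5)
Your proposal is correct and follows the same route as the paper: decompose $L_s$ by dyadic level $i$, apply Lemma \ref{lem;effective help} to each interval of length $2^i$, bound the number of level-$i$ intervals by $2^{s-i}$, and sum over the $s$ levels. Nothing further is needed.
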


\begin{proof} We estimate the left hand side as
\begin{align}
&\sum_{[b, c]\in L_s} 
\int_Y 
\left(
\int_b^c{F(x,t)}\dd t
\right)^2 
 \dd\nu(x) \notag\\
 \le & \sum_{i=0}^{s-1} \sum_{j=0}^{2^{s-i}-1}
 \int_Y
\left( \int_{2^ij}^{2^i(j+1)}{F(x,t)}\dd t \right)^2 \dd\nu(x)  \notag\\
\le & \sum_{i=0}^{s-1} \sum_{j=0}^{2^{s-i}-1}   {4C\delta^{-1}2^i},
   && \mbox{by Lemma \ref{lem;effective help}} \notag\end{align}
   which is clearly bounded from above by $4C \delta^{-1} s2^s$.
\end{proof}

\begin{lem}\label{lem;effective cover}
Let $k, s$ be positive integers with $ k < 2^s$. Then the interval 
$[0, k]$ can be covered by at most $s$ intervals in $L_s$.
\end{lem}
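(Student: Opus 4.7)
The plan is to use the binary expansion of $k$. Write
\[
k = 2^{i_1} + 2^{i_2} + \cdots + 2^{i_r}, \quad s-1 \ge i_1 > i_2 > \cdots > i_r \ge 0,
\]
which is possible since $k < 2^s$; in particular $r \le s$, the number of 1's in the binary expansion being at most the number of available bits.

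Next, define the partial sums $S_0 := 0$ and $S_j := 2^{i_1} + \cdots + 2^{i_j}$ for $1 \le j \le r$, so that $S_r = k$. The proposed cover of $[0, k]$ consists of the $r$ intervals
\[
[S_{j-1}, S_j], \qquad j = 1, \ldots, r.
\]
These clearly cover $[0,k]$, and there are at most $s$ of them, so it remains to verify that each one belongs to $L_s$.

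For each $j$, the length $S_j - S_{j-1}$ equals $2^{i_j}$, and $S_{j-1} = \sum_{\ell < j} 2^{i_\ell}$ is a sum of powers of $2$ with exponents strictly larger than $i_j$, hence $S_{j-1}$ is a multiple of $2^{i_j}$. Writing $S_{j-1} = 2^{i_j} m_j$, we therefore have $[S_{j-1}, S_j] = [2^{i_j} m_j, 2^{i_j}(m_j+1)]$, which is of the required form. Finally, $2^{i_j}(m_j+1) = S_j \le k < 2^s$, so the $L_s$-membership condition is satisfied. The only substantive step is the divisibility observation, which is essentially automatic from the strictly decreasing order of exponents; I do not foresee any obstacle.
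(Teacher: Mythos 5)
Your proof is correct and follows exactly the route the paper intends: the paper's own proof is a one-line remark that the intervals "can be easily constructed using binary expansion of $k$," and your argument simply spells out that construction (partial sums of the binary digits, with the divisibility of $S_{j-1}$ by $2^{i_j}$ guaranteeing membership in $L_s$ and the bound $S_j \le k < 2^s$ giving the required strict inequality).
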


{\begin{proof} These intervals can be easily constructed using binary
    expansion of $k$. See also \cite[Lemma 1]{s60}. \end{proof}}

\begin{lem}\label{lem;effective key}
For every $\varepsilon >0$, there exists a sequence of measurable
subsets $\{Y_s\}_{s\in \N}$ of $Y$ such that: 
\begin{itemize}
\item[\rm (i)] $\nu(Y_s)\le 4C\delta ^{-1} s^{-(1+{2}\varepsilon)}$.
\item[\rm (ii)] For every positive  integer $k$ with $k<2^s$ and every ${y}\not \in Y_s$ one has 
\begin{align}\label{eq;effective k}
\left |\int_0^k {F(y,t)}\dd t\right |\le 2^{s/2} s^{{\frac32+\varepsilon} }. 
\end{align}
\end{itemize}
\end{lem}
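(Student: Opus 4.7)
I would prove the lemma by combining a Markov/Chebyshev-type bound applied to the \emph{sum} of squared integrals over all intervals in $L_s$ (rather than to each interval separately) with a Cauchy--Schwarz step when assembling a given $[0,k]$ from at most $s$ dyadic pieces.

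Concretely, the plan is to define
\[
Y_s := \left\{y \in Y : \sum_{[b,c] \in L_s} \left(\int_b^c F(y,t)\,\dd t\right)^{\!2} > 2^{s} s^{2+2\varepsilon} \right\}.
\]
Applying Markov's inequality and swapping sum with expectation, then invoking Lemma~\ref{lem;effective uniform}, gives
\[
\nu(Y_s) \le \frac{1}{2^{s} s^{2+2\varepsilon}} \sum_{[b,c] \in L_s} \int_Y \left(\int_b^c F(x,t)\,\dd t\right)^{\!2} \dd\nu(x) \le \frac{4C\delta^{-1} s\, 2^{s}}{2^{s} s^{2+2\varepsilon}} = 4C\delta^{-1} s^{-(1+2\varepsilon)},
\]
which is exactly (i).

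To verify (ii), fix $y \notin Y_s$ and $k$ with $0 < k < 2^s$. By Lemma~\ref{lem;effective cover} one can write $[0,k] = I_1 \cup \dots \cup I_{s'}$ as a disjoint union (up to endpoints) with $s' \le s$ and each $I_j \in L_s$. Then by the triangle inequality followed by Cauchy--Schwarz,
\[
\left|\int_0^k F(y,t)\,\dd t\right| \le \sum_{j=1}^{s'}\left|\int_{I_j} F(y,t)\,\dd t\right| \le \sqrt{s'} \left(\sum_{j=1}^{s'} \left(\int_{I_j} F(y,t)\,\dd t\right)^{\!2} \right)^{\!1/2}.
\]
Since the $I_j$ are distinct elements of $L_s$, the sum on the right is bounded by the full sum over $L_s$, which by the definition of $Y_s$ is at most $2^{s} s^{2+2\varepsilon}$. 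Using $s' \le s$ we obtain
\[
\left|\int_0^k F(y,t)\,\dd t\right| \le \sqrt{s}\,\sqrt{2^{s} s^{2+2\varepsilon}} = 2^{s/2} s^{3/2+\varepsilon},
\]
which is (\ref{eq;effective k}).

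The only genuinely subtle point is the choice to bound the \emph{sum} of squares in one shot rather than controlling each dyadic piece separately; a naive union bound using one Chebyshev inequality per interval $[b,c]$ with threshold $2^{s/2}s^{1/2+\varepsilon}$ would cost an extra factor of $s$ in the total measure and only yield $\nu(Y_s) \ll s^{-2\varepsilon}$, which is too weak. The $\sqrt{s'}$ factor from Cauchy--Schwarz absorbs the gap and gives precisely the exponent $\tfrac{3}{2} + \varepsilon$ required for the final equidistribution rate.
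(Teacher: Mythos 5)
Your proposal is correct and coincides with the paper's own argument: the authors define $Y_s$ by exactly the same condition on $\sum_{I\in L_s}\bigl(\int_I F(y,t)\,\dd t\bigr)^2$, obtain (i) from Lemma \ref{lem;effective uniform} via Markov's inequality, and obtain (ii) from the dyadic covering of Lemma \ref{lem;effective cover} together with the Cauchy--Schwarz step yielding the factor $s$ and hence the exponent $\frac32+\varepsilon$ after taking square roots. Nothing further is needed.
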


\begin{proof}
Let 
\[
Y_s=
\left \{
{y}\in Y:\sum_{I\in L_s }    \left( \int_I {F(y,t)}\dd t 
\right )^2  > 2^s s^{2+{2}\varepsilon}
\right \}. 
\]
Assertion (i) follows from Lemma \ref{lem;effective uniform}
and 
{Markov's} Inequality. 
By  Lemma \ref{lem;effective cover} there exists a 
subset $L(k)$ of $L_s$ with cardinality at most $s$ such that 
$[0, k]=\bigcup_{I\in L(k)}I$.  For $k<2^s$ and  ${y}\not\in Y_s$ we {estimate}
\begin{align}
\notag
& \left(
\int_0^k {F(y,t)}\dd t
\right)^2      \\
\notag
\le & \left(\sum_{I \in L(k)} \int_I {F(y,t)} \dd t \right)^2
\\
\notag
 \le &\ s   \sum_{I\in L(k)}  \left( \int_I {F(y,t)}\dd t 
\right)^2     && \mbox{by Cauchy's inequality} \\
\notag
\le &\ s \sum_{I\in L_s }    \left( \int_I {F(y,t)}\dd t 
\right )^2    
\le 
\ 2^s s^{3+{2}\varepsilon}   && \mbox{since } y\not\in Y_s.\label{eq;effective line}
\end{align}
Now (\ref{eq;effective k}) follows 
by taking square roots. 
\end{proof}

\begin{proof}[Proof of Theorem \ref{thm;effective}]
We fix $\varepsilon>0$ and choose a sequence of measurable subsets 
$\{Y_s\}_{s\in \N}$ as in Lemma \ref{lem;effective key}. 
Note that 
\[
\sum_{s=1}^\infty\nu(Y_s)\le \sum_{s=1}^\infty4C\delta ^{-1}s^{-(1+{2}\varepsilon)}<\infty.
\]
The Borel-Cantelli lemma implies that there exists a measurable subset ${Y(\varepsilon)}$ of $Y$
with full measure such that for every $y\in {Y(\varepsilon)}$ there exists {$s_y\in\N$ such that $y\not\in Y_s$
whenever $s\ge  s_y$}.  

We will show that  
for 
every $y\in {Y(\varepsilon)}$ {one has
\eq{ll}{
 \frac{1}{T}\left|\int_0^T {F(y,t)}\dd t\right| {\ll}\ 
 T^{-{1/2}} \log^{{\frac32+\varepsilon} }(T)
}
 provided 
  $T$ is large enough, where the implicit constant depends only on ${F}$.}
Given
$T > 2$,
 let {$k = \lfloor T\rfloor$ and $s = 1 + \lfloor \log T\rfloor$, so that
 $2^{s-1}\le k \le T<k+1\le  2^s$.} 
 Suppose $T\ge 2^{s_y{-1}}$, then $s\ge s_y$ and hence $y\not\in Y_s$. 
Therefore we have
 \begin{align*}
 \left|\int_0^T {F(y,t)}\dd t\right| 
 \le &\  {\|F\|_\infty} +\left|\int_0^k {F(y,t)}\dd t\right| \\
 \le &\   {\|F\|_\infty} +2^{s/2}s^{{\frac32+\varepsilon} }
 && \mbox{by (\ref{eq;effective k})} \\
 \le&\  {\|F\|_\infty} +(2T)^{{1/2}}\log^{{\frac32+\varepsilon} }(2T).
  \end{align*}
{ and \equ{ll} follows.
This clearly implies (\ref{eq;effective goal})  for $y\in \cap_{k\in\N}Y(1/k)$.} 
 \end{proof}

\begin{proof}[Proof of Theorem \ref{thm;main rate}] Recall that we are
  given $\Lambda \in \Xn$, $\varphi\in C_c^\infty(\Xn)$ and
  $\varepsilon>0$. 
Take $f\in C_c^\infty(M)$ with $f\ge 0$ and $\int_M f\dd \vartheta=1$.  
Let $\nu$ be the probability measure on $\Xn$ defined by 
\[
\int_{\Xn}\psi \dd \nu=\int_M f(\vartheta)\psi\big(u(\vartheta)\Lambda\big )\dd \vartheta
\]
for every $\psi\in C_c({X})$. 
{Denote $\alpha=\int_{\Xn} \varphi \dd \mu$ and for $t,w\ge 0$ write
$$
\begin{aligned}
\int_{\Xn}\big(\varphi(g_t \Lambda )-{\alpha}\big)&\big(\varphi(g_{w}
\Lambda )-{\alpha}\big) \dd\nu =
I_{\Lambda,\Lambda,f,\varphi,\varphi}( t,{w}) -  
\int_M f \dd\vartheta \left( \int_{\Xn} \varphi\dd\mu\right)^2\\ 
&-\ 
\alpha \left(\int_{\Xn} f(\vartheta)\varphi\big(g_t u(\vartheta)\Lambda\big )\dd \vartheta \ - 
\int_M f \dd\vartheta \int_{\Xn} \varphi\dd\mu\right) \qquad  \\
&-\ 
\alpha \left(\int_{\Xn} f(\vartheta)\varphi\big(g_w u(\vartheta)\Lambda\big )\dd \vartheta - 
\int_M f \dd\vartheta \int_{\Xn} \varphi\dd\mu\right). \qquad\ 
\end{aligned}
$$
Applying  Theorem \ref{thm;mixing}  and Proposition \ref{prop;one mixing},
we conclude  that there exist $C>0$ and $\delta>0$ such that the estimate 
$$
\left|\int_{\Xn}\big(\varphi(g_t \Lambda )-{\alpha}\big)
  \big(\varphi(g_{w} \Lambda )-{\alpha}\big) \dd\nu\right|\le \  C
e^{-\delta\min(t,  {w}-t)} 
$$
holds  for any ${w} \geq
  t \geq 0$.
{Then we can apply} Theorem \ref{thm;effective}  with $F(x,t) = \varphi(g_tx) - \alpha$} 
{and obtain} (\ref{eq;main rate goal}) for 
almost every $\vartheta\in S_f$, where $S_f:= \{\vartheta\in M: f(\vartheta)>0\}$.
Since countably many {sets of the form} $S_f$ 
exhaust $M$, we reach the desired conclusion.
\end{proof}

\begin{rem} \label{alternative proof} Arguing similarly with $\nu$
    replaced by $\mu$  and using  exponential mixing of the
    $g_t$-action, see e.g.\ \cite[Theorem 1.1]{km12}, instead of
    Theorem \ref{thm;mixing},  
one can easily obtain \equ{eq;main rate in X}  for 
almost every $\Lambda\in\Xn$. \end{rem}

\section{Lattice points counting}\label{sec;count}
In this section we  recall a result of Schmidt \cite{s60} concerning a
counting problem arising from Diophantine approximation,  and relate it to the 
  $D$-action on $\Xn$.  From this we will deduce some estimates which
  will be used in the proof of 
Theorem \ref{thm;main}.

Let  $\mathbf{a}, \mathbf{b}$ be weight vectors, let
$\pi_{\mathbf{a}}, \pi_{\mathbf{b}}$ be 
 as in \equ{defpi}, 
and let $c, A, B$ be as in Theorem
\ref{thm;diophantine}. 
For an individual vector $\bv = (\mathbf{x}, \mathbf{y}) \in \R^d$ we
write $g_t\bv = (\mathbf{x}_t, \mathbf{y}_t)$, and then we have
$$\|\mathbf{y}_t\|_{\mathbf{b}} = e^{-t} \|\mathbf{y}
\|_{\mathbf{b}}$$
and
$$\|\mathbf{x}_t\|_{\mathbf{a}} \cdot \| \mathbf{y}_t \|_{\mathbf{b}} =
\|\mathbf{x}\|_{\mathbf{a}} \cdot \| \mathbf{y}\|_{\mathbf{b}}, \
\pi_{\mathbf{a}}(\mathbf{x}_t) = \pi_{\mathbf{a}}(\mathbf{x}), \ 
\pi_{\mathbf{b}}(\mathbf{y}_t) = \pi_{\mathbf{b}}(\mathbf{y})$$ 
for all $t$. 

For $r>0$, define \eq{deff}{f_{A,B,r,c}:=\mathbbm 1_{E_{r,c}(A,B)}}  to be the characteristic function of  $E_{r,c}(A,B)$.  
It follows that 
$$
f_{A,B,r,c}(g_t\bv) = 
\begin{cases} 1 & 
\begin{aligned}
 \text{if }
&e^t \leq \|\mathbf{y}\|_{\mathbf{b}} < e^{t+r}, \|\bx\|_\ba\cdot \|\by\|_\bb<c
\\
&\text{ and }
(\pi_{\mathbf{a}}\big(\mathbf{x}), \pi_{\mathbf{b}}(\mathbf{y})\big) \in A \times B
\end{aligned}
\\
0 & \text{otherwise.}
\end{cases} 
$$
Therefore 
$$
\bv \in E_{T,c}(A,B) \ssm E_{r,c}(A,B) \implies |\{ t \in [0,T]: g_t\bv \in
E_{r,c}(A,B) \}| = r
$$
and
$$
g_t\bv \in E_{r,c}(A,B) \text{ for some } t \in [0,T] \implies \bv \in E_{r+T,c}(A,B).
$$
Using \equ{eq: Siegel transform}, and changing the order
of summation and integration, it follows that for any
$\Lambda \in \Xn$ and any $T>r$ we have 
\eq{eq;key link}
{\begin{aligned}
\sharp\Big (\Lambda \cap \big(E_{T,c}(A,B)\ssm E_{r,c}(A,B)\big)\Big)
&\le \frac{1}{r}\int_0^T\widehat{
f}_{A,B,r,c}(g_t \Lambda)  \dd t \\ &\le
\sharp \big( E_{ r+T,c}(A,B)\cap \Lambda\big).
\end{aligned}}

Let $\mathbb S^{{\m}-1}_+:=\{\bx\in \mathbb S^{{\m}-1}: x_i\ge 0\}$ and
$\mathbb S^{{\n}-1}_+:=\{\by\in \mathbb S^{{\n}-1}: y_j\ge 0\}$. Also for brevity let us denote by $\Lambda_\vartheta$ the lattice $u(\vartheta)\Lambda_0$.
{Our main tool for proving Theorem \ref{thm;main}} is the following result 
of Schmidt.
\begin{thm}[\cite{s60}{,} Theorem{s} 1 and 2]\label{thm;schmidt use}
For almost every $\vartheta\in M$, 
\begin{align}\label{eq;count s60}
\sharp \left(E_{T, c}(\mathbb S^{{\m}-1}_+,\mathbb S^{{\n}-1}_+)\cap \Lambda_{\vartheta}
\right)\thicksim  cT \qquad \mbox{as }T\to \infty.
\end{align}
\end{thm}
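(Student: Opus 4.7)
The plan is to prove the asymptotic $N(T,\vartheta) := \sharp \bigl(E_{T,c}(\mathbb S^{m-1}_+,\mathbb S^{n-1}_+)\cap \Lambda_\vartheta\bigr) \sim cT$ for a.e.\ $\vartheta$ by a Chebyshev/Borel--Cantelli argument applied to a sparse subsequence $\{T_k\}$, followed by a monotonicity interpolation. Note that $\operatorname{vol}\bigl(E_{T,c}(\mathbb S^{m-1}_+,\mathbb S^{n-1}_+)\bigr) = cT$ by a direct computation, and that the restriction to positive orthants is crucial: it makes $E_{T,c}(\mathbb S^{m-1}_+,\mathbb S^{n-1}_+)$ monotone increasing in $T$, so $N(T,\vartheta)$ is nondecreasing in $T$.

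First I would compute the mean. Writing
$$N(T,\vartheta) = \sum_{(\bp,\bq)\in \Z^m\times \Z^n\smallsetminus 0} \mathbbm 1_{E_{T,c}(\mathbb S^{m-1}_+,\mathbb S^{n-1}_+)}(\vartheta\bq - \bp,\bq),$$
integrate over $\vartheta$ in a fundamental domain for the $\Z^{mn}$-action on $M$; for each fixed $\bq\neq 0$, unfold the $\bp$-sum into an integral of $\vartheta\bq \in \R^m$, which produces the $\bx$-slice volume. Summing over $\bq$ with $1\le \|\bq\|_\bb < e^T$ and $\pi_\bb(\bq)\in \mathbb S^{n-1}_+$ and comparing the Riemann sum to the integral gives $\int_M N(T,\vartheta)\,\dd\vartheta = cT + O(1)$.

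Next I would bound the variance by expanding $\int_M N(T,\vartheta)^2\,\dd\vartheta$ into a double sum over pairs $(\bp_i,\bq_i)$, $i=1,2$. The contribution of \emph{parallel} pairs (those with $(\bq_1,\bq_2)$ $\Q$-linearly dependent) is $O(T\log T)$ by a divisor-type bound, while the contribution of linearly independent pairs factors up to an error controlled by counts of rank-$2$ integer sublattices with prescribed covolume -- this is the technical core of Schmidt's argument in \cite{s60}. The outcome is $\operatorname{Var}\bigl(N(T,\cdot)\bigr) \ll T$. Applying Chebyshev along $T_k = k^{1+\eta}$ for any $\eta>0$ gives
$$\sum_k \bigl|\{\vartheta : |N(T_k,\vartheta) - cT_k| > \varepsilon T_k\}\bigr| \ll \sum_k T_k^{-1} < \infty,$$
so Borel--Cantelli yields $N(T_k,\vartheta)/T_k \to c$ for a.e.\ $\vartheta$. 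Finally, by monotonicity, for $T_k\le T < T_{k+1}$ we have $N(T_k,\vartheta) \le N(T,\vartheta) \le N(T_{k+1},\vartheta)$; since $T_{k+1}/T_k \to 1$, this upgrades the subsequential limit to \eqref{eq;count s60}.

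The main obstacle will be the variance estimate, specifically the off-diagonal pair count. The integrals over pairs of $\Q$-linearly independent $(\bq_1,\bq_2)$ do not factor cleanly: they couple through $\vartheta$, and one must show that the resulting ``sublattice density'' essentially matches the product of the two marginals, with an error of size $O(T)$ rather than $O(T^2)$. This requires a careful stratification of pairs of primitive integer vectors by the covolume of the rank-$2$ sublattice they generate, together with sharp counts for each stratum; this is exactly the combinatorial/arithmetic input supplied by Theorems 1 and 2 of \cite{s60}.
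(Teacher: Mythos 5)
Your skeleton (mean by unfolding over a fundamental domain of $M/\Z^{mn}$, a second-moment bound, Chebyshev along $T_k=k^{1+\eta}$ plus Borel--Cantelli, interpolation by monotonicity --- which, incidentally, holds for arbitrary $A,B$ and has nothing to do with the positive orthants, since only the cutoff $\|\by\|_\bb<e^T$ varies with $T$) is the right shape, and it is in effect an attempt to reconstruct Schmidt's proof: the paper does not prove Theorem \ref{thm;schmidt use} but quotes it from \cite{s60} (with $h=e^T$, $\psi_i(q)=(c/q)^{a_i}$), adding only that Schmidt's argument tolerates $\bb\neq\bn$. The gap in your plan is the variance step, and you have located the difficulty in the wrong place. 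Averaging over $\vartheta$, if $\bq,\bq'\in\Z^{\n}$ are $\Q$-linearly independent then $(\vartheta\bq,\vartheta\bq')$ is \emph{exactly} uniformly distributed modulo $\Z^{2\m}$ (no relation $k\bq+k'\bq'=0$ with $(k,k')\neq(0,0)$ exists, so no nontrivial character survives), hence the two counting functions are independent and their covariance is identically zero; no stratification by covolumes of rank-two sublattices is needed or relevant --- that device is the Rogers--Schmidt second moment for Siegel transforms on $X=G/\Gamma$, i.e.\ the subject of \cite{S60a}, not of \cite{s60}. Moreover Theorems 1 and 2 of \cite{s60} are the final metrical counting statements, essentially the statement being proved, not pair-correlation lemmas; invoking them as ``the technical core'' of your variance bound is circular.

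The entire difficulty sits in the pairs you dismiss with ``$O(T\log T)$ by a divisor-type bound'': the $\Q$-dependent pairs $\bq'\in\Q\bq$. For $\n\ge 2$ these are sparse enough that gcd estimates do suffice, but for $\n=1$ every pair is dependent, and in the admissible case $\m=\n=1$ (so $d=2$, $\psi(q)=c/q$) the claimed bound is false: for $\vartheta$ at distance $\vre$ from a rational $p'/q'$ all multiples $q=kq'$ with $k<(c/\vre)^{1/2}/q'$ produce solutions, and summing these contributions over $q'$ and over dyadic scales of $\vre$ shows that the variance of the counting function is of order $T^2$, comparable to the square of the mean; Chebyshev at threshold $\vre T_k$ then gives a bound that does not decay in $k$, and Borel--Cantelli yields nothing. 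This is exactly the classical obstruction in the quantitative one-dimensional Khintchine theorem, and Schmidt (following Erd\H{o}s--LeVeque ideas) circumvents it by a genuinely different device --- passing to coprime solutions, for which the overlaps are quasi-independent, and resumming over divisors, combined with the dyadic-block lemma (Lemma 1 of \cite{s60}, the same one used in \S\ref{sec: effective} of this paper) rather than a single Chebyshev bound. So either you quote Schmidt's Theorems 1--2 as the paper does --- in which case all that remains is the easy reformulation into the one-sided positive-orthant count and the remark that the argument works for the weighted quasi-norm $\|\cdot\|_\bb$ --- or you must actually carry out the dependent-pair analysis; as written, your sketch does neither.
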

The case ${\bb}=\bn$ of this result follows from \cite[Thm.\ 1 and {2} ]{s60} 
(setting $h=e^T, \, \psi_i(q) =
\big(\frac{c}{q}\big)^{a_i} $ in Schmidt's notation),
and in fact Schmidt also obtains an
error estimate $o\left(T^{1/2} \log^{\tau+\varepsilon} (T) \right)$ where $\tau=2$
if $n=1$ and $\tau=1.5$ otherwise. The
case ${\bb}\neq \bn$ does not appear in \cite{s60}, 
but the argument given there works for arbitrary $\bb$.

\begin{cor}\label{cor;count equi}
Let $r, c>0$.
Then for almost every $\vartheta\in M$, 
\begin{align}\label{eq;count equi}
\lim_{T\to \infty}\frac{1}{T}\int_0^T \widehat{
f}_{\mathbb S^{{\m}-1},\mathbb S^{{\n}-1},r,c}
(g_t \Lambda_{\vartheta} )\dd t =|E_{r,c}
|.
\end{align}
\end{cor}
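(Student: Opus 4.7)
The approach is to combine the sandwich inequalities \equ{eq;key link} (specialized to $A = \mathbb{S}^{\m-1}$ and $B = \mathbb{S}^{\n-1}$) with an asymptotic lattice-point count. Concretely, it is enough to establish that for a.e.\ $\vartheta \in M$,
\begin{equation}\label{fullcount}
\sharp\big(\Lambda_\vartheta \cap E_{T,c}\big) \sim 2^d c T = |E_{T,c}| \qquad (T \to \infty).
\end{equation}
Once \equ{fullcount} is in hand, \equ{eq;key link} forces $\frac{1}{rT}\int_0^T \widehat{f}_{\mathbb{S}^{\m-1},\mathbb{S}^{\n-1},r,c}(g_t \Lambda_\vartheta)\dd t$ to lie between $\frac{1}{T}\bigl(\sharp(\Lambda_\vartheta \cap E_{T,c}) - \sharp(\Lambda_\vartheta \cap E_{r,c})\bigr)$ and $\frac{1}{T}\sharp(\Lambda_\vartheta \cap E_{T+r,c})$; both sides tend to $2^d c$, and multiplying by $r$ gives the desired limit $2^d cr = |E_{r,c}|$.

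Theorem \ref{thm;schmidt use} only provides this count in one closed orthant, so to reach \equ{fullcount} I would exploit a sign-change symmetry. For $(\sigma,\eta) \in \{\pm 1\}^{\m} \times \{\pm 1\}^{\n}$, the diagonal matrix $D_{\sigma,\eta} = \diag(\sigma_1, \dots, \sigma_{\m}, \eta_1, \dots, \eta_{\n}) \in \GL_d(\Z)$ preserves $\Lambda_0$ and preserves $E_{T,c}$, since the weighted quasi-norms and the projections $\pi_\ba, \pi_\bb$ depend on coordinates only through their absolute values (up to sign equivariance), and it maps $E_{T,c}(\mathbb{S}^{\m-1}_+, \mathbb{S}^{\n-1}_+)$ onto $E_{T,c}(\mathbb{S}^{\m-1}_\sigma, \mathbb{S}^{\n-1}_\eta)$. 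A direct computation using $D_{\sigma,\eta} u(\vartheta) = u(\tilde\vartheta) D_{\sigma,\eta}$ with $\tilde\vartheta_{ij} = \sigma_i \eta_j \vartheta_{ij}$ gives $D_{\sigma,\eta} \Lambda_\vartheta = \Lambda_{\tilde\vartheta}$; since $\vartheta \mapsto \tilde\vartheta$ is measure-preserving on $M$, applying Theorem \ref{thm;schmidt use} to each of the $2^d$ sign-changed parameters $\tilde\vartheta$ shows that for a.e.\ $\vartheta$ the count of $\Lambda_\vartheta$ in each of the $2^d$ closed-orthant regions is $\sim cT$.

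The sum of these $2^d$ counts equals $\sharp(\Lambda_\vartheta \cap E_{T,c})$ plus an overcount from lattice vectors on coordinate hyperplanes. For a.e.\ $\vartheta$ no non-zero lattice vector has $x_i = 0$, because the condition $x_i = \vartheta_i \bq - p_i = 0$ picks out a codimension-one subspace of $M$ for each $(\bp,\bq)\neq 0$. The remaining boundary contribution from vectors with $q_j = 0$ for some $j$ I would control by Borel--Cantelli: the measure of $\vartheta$ admitting a matching $\bp$ with $\|\vartheta\bq - \bp\|_\ba < c/\|\bq\|_\bb$ for a given non-zero $\bq$ with $q_j = 0$ is $O(\|\bq\|_\bb^{-1})$, and $\sum \|\bq\|_\bb^{-1}$ over such $\bq$ converges since the remaining weights $b_i$ ($i \neq j$) sum to $1 - b_j < 1$. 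Hence for a.e.\ $\vartheta$ only finitely many such boundary vectors ever occur, the overcount is $O(1)$, and \equ{fullcount} follows. The main obstacle is precisely this boundary step: naively summing the closed-orthant Schmidt counts generates a spurious contribution from vectors with a vanishing $\by$-coordinate, and establishing that it is lower order (rather than comparable to $T$) is what allows the orthant decomposition to close up cleanly.
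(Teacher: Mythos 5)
Your argument follows the paper's own route: specialize \equ{eq;key link} to $A=\mathbb S^{\m-1}$, $B=\mathbb S^{\n-1}$, decompose $E_{T,c}$ into the $2^d$ closed orthants, apply Theorem \ref{thm;schmidt use} to each sign-flipped parameter $\zeta_I\vartheta\eta_J$ (your $\tilde\vartheta$), and sum. The one place where you go beyond the paper is in your treatment of the boundary overcount: the paper only imposes the genericity condition $(\vartheta\bq)_i - p_i\neq 0$ to kill the $x_i=0$ boundaries and then asserts $\widetilde N_{T,c}(\vartheta)\sim 2^d cT$ without addressing the $q_j=0$ faces, where the $2^\n$ orthant counts genuinely do overlap. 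Your Borel--Cantelli argument (measure of bad $\vartheta$ for a fixed $\bq$ with $q_j=0$ is $O(\|\bq\|_\bb^{-1})$, and $\sum_{\bq:\,q_j=0}\|\bq\|_\bb^{-1}<\infty$ because the residual weights sum to $1-b_j<1$) correctly shows this extra contribution is $O(1)$ for a.e.\ $\vartheta$, which is exactly what is needed. So the proposal is correct, matches the paper's strategy, and in fact patches a small gap that the paper elides.
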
 
\begin{proof}
Let $N_{T,c}( \vartheta)$
be 
the number of 
solutions
 $(\mathbf p, \mathbf q)\in \mathbb Z^{{\m}}\times \mathbb Z^{\n}_{\ge 0}$ of the system 
\begin{align*}
\begin{cases}
& 0\le (\vartheta\mathbf q)_i-p_i< c^{a_i}\|\mathbf q\|_{\mathbf b}^{-a_i} \qquad (i=1, \ldots, {\m}) 
\\
&1\le \|\mathbf q\|_{\mathbf b}< e^T.
\end{cases}
\end{align*}
It follows directly from the definitions that 
\begin{align}\label{eq;count equi n}
N_{T,c}(\vartheta)=\sharp\left (E_{T, c}(\mathbb S^{{\m}-1}_+, \mathbb S^{{\n}-1}_+)\cap \Lambda_{\vartheta}\right).
\end{align}

Let $\{\mathbf e_i:1\le i\le {\m}\}$ be the standard basis of $\R^{\m}$.
 For $I\subset \{1, \ldots, {\m}\}$ 
 we let $\zeta_I : \R^{\m} \to \R^{\m}$ be the linear transformation defined
 by 
$$\zeta_I\mathbf e_i= \begin{cases} -\mathbf e_i & i\in I \\ 
\mathbf e_i & i \notin I. \end{cases}$$ 
Analogously we define $\eta_J: \R^{\n} \to
 \R^{\n}$  for $J\subset \{1, \ldots, {\n}\}$. 
It follows from Theorem \ref{thm;schmidt use} that for almost every $\vartheta\in M$
\begin{align}\label{eq;count number}
N_{T,c}( \zeta_I\vartheta \eta_J)\thicksim c  T \qquad \mbox{as } T \to \infty.  
\end{align}
On the other hand, it is easy to see that 
$
N_{T,c}( \zeta_I\vartheta \eta_J)
$
is the number of 
solutions
$(\mathbf p, \mathbf q)\in \mathbb Z^{ d }$ of the system 
\begin{align*}
&c^{a_i}\|\mathbf q\|_{\mathbf b}^{-a_i}<  (\vartheta\mathbf q)_i-p_i\le 0 &(i \in I)\notag\\
& 0\le (\vartheta\mathbf q)_i-p_i< c^{a_i}\|\mathbf q\|_{\mathbf b}^{-a_i} &(i\not \in I) \notag\\
&1\le \|\mathbf q\|_{\mathbf b}< e^T  & \notag \\
& q_j\le 0 &(j \in J)  \notag\\
& q_j\ge 0 &(j\not \in J) \notag
\end{align*}

Now let $\widetilde N_{T,c}( \vartheta)$ be the number of 
solutions $(\mathbf p, \mathbf q )\in \mathbb Z^{ d }$
of the system \equ{kg1}--\equ{kg2}.
Then  for almost every $\vartheta\in M$ (i.e.\ for those $\vartheta$ for
which (\ref{eq;count number}) holds for all $I$ 
and 
$(\vartheta\mathbf q)_i-p_i$ is not equal to zero for any $\mathbf{p}, \mathbf{q}$ with $\bq\neq 0$ and any $i$), one has 
\begin{align}\label{eq;diophantine claim}
\widetilde N_{T,c}( \vartheta)\thicksim  2^{ d } cT\qquad\mbox{as }T\to \infty.
\end{align}
As was mentioned in the introduction, 
\begin{align}\label{eq;count equi erc}
\widetilde N_{T,c}(\vartheta)=\sharp\left (E_{T, c}\cap \Lambda_{\vartheta}\right)\text{ \  and  \ }|E_{T, c}|=2^dcT .
\end{align}
It follows from 
(\ref{eq;count equi erc})
and (\ref{eq;key link})
that  (\ref{eq;count equi}) holds for those $\vartheta$ which satisfy
(\ref{eq;diophantine claim}).
\end{proof}

\begin{cor}\label{cor;count equi f}
For $r, c>0$ we let
\begin{align}\label{eq;f r c}
F_{r, c}:=
\left\{(\mathbf x, \mathbf y)\in \mathbb R^d: \|\bx\|_{\mathbf a}\|\mathbf
y\|_{\mathbf b}< c, \, 1\le  
\|\mathbf x\|_{\mathbf a}<e^r \right\}.
\end{align}
Then for almost every $\vartheta\in M$
\begin{align*}
\lim_{T\to \infty}\frac{1}{T}\int_0^T \widehat{\mathbbm 1} _{F_{r,c}}(g_t \Lambda_{\vartheta})\dd t =|F_{r,c}|.
\end{align*}
\end{cor}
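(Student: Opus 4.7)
The argument mirrors the proof of Corollary \ref{cor;count equi}, with the role of $E$-regions played by a dual family of $F$-regions. First, I would derive the analog of \equ{eq;key link}. Since $g_t\bv\in F_{r,c}$ is equivalent to $\|\bx\|_{\ba}\|\by\|_{\bb}<c$ together with $t\in[-\log\|\bx\|_{\ba},-\log\|\bx\|_{\ba}+r)$, Fubini together with a routine case analysis on the intersection with $[0,T]$ gives, upon setting
\[F'_{T,c}:=\{(\bx,\by)\in\R^d:\|\bx\|_{\ba}\|\by\|_{\bb}<c,\ e^{-T}\le\|\bx\|_{\ba}<1\},\]
the two-sided estimate
\[r\cdot\sharp(F'_{T-r,c}\cap\Lambda)\le\int_0^T\widehat{\mathbbm{1}}_{F_{r,c}}(g_t\Lambda)\dd t\le r\cdot\sharp(F'_{T,c}\cap\Lambda)+O_\Lambda(1),\]
the $O_\Lambda(1)$ error absorbing both the finitely many contributions of the bounded set $F_{r,c}$ and the partial-interval boundary terms coming from $\|\bx\|_{\ba}\in[e^{-T},e^{r-T})\cup(1,e^r)$. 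A direct radial integration using $|\{\bx\in\R^m:\|\bx\|_{\ba}<R\}|=2^mR$ yields $|F'_{T,c}|=2^dcT$ and $|F_{r,c}|=2^dcr$.

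The remaining core step is to show that $\sharp(F'_{T,c}\cap\Lambda_\vartheta)\sim 2^dcT$ as $T\to\infty$ for a.e.\ $\vartheta\in M$. Any lattice vector of $\Lambda_\vartheta$ with $\bq=0$ has $\|\bx\|_{\ba}\in\{0\}\cup[1,\infty)$, so only $\bq\neq 0$ contributes, and the count reduces to counting solutions $(\bp,\bq)\in\Z^m\times(\Z^n\nz)$ of $\|\vartheta\bq-\bp\|_{\ba}\|\bq\|_{\bb}<c$ with $e^{-T}\le\|\vartheta\bq-\bp\|_{\ba}<1$. This is a Diophantine counting problem dual to the one governed by $\widetilde N_{T,c}(\vartheta)$ in the proof of Corollary \ref{cor;count equi}, and can be established by the variance method of \cite{s60}: a Siegel--Veech-type computation for the orbit $\{u(\vartheta)\Lambda_0:\vartheta\in[0,1]^{mn}\}$ gives $\int\widehat{\mathbbm{1}}_{F'_{T,c}}(u(\vartheta)\Lambda_0)\dd\vartheta=2^dcT+O(1)$, and the variance bound $\int(\widehat{\mathbbm{1}}_{F'_{T,c}}-2^dcT)^2\dd\vartheta=O(T)$, together with Chebyshev's inequality and the Borel--Cantelli lemma applied along the dyadic sequence $T_k=2^k$, yield the asymptotic on a full-measure set. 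Monotonicity of $F'_{T,c}$ in $T$ extends this to arbitrary $T\to\infty$.

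With the counting asymptotic in hand, dividing the key inequality by $T$ and letting $T\to\infty$ shows that both bounds tend to $r\cdot 2^dc=|F_{r,c}|$, giving the claim. The main obstacle is the variance estimate for $\widehat{\mathbbm{1}}_{F'_{T,c}}$: although entirely analogous to the estimates in \cite{s60} for $E$-regions, the second moment computation needs to be redone for the dual cutoff $\|\vartheta\bq-\bp\|_{\ba}\ge e^{-T}$. An alternative route avoids this technicality by passing to a dual lattice: under the involution $w(\bx,\by)=(\by,-\bx)$, the counting for $u(\vartheta)\Lambda_0$ in $F'_{T,c}$ corresponds to a Schmidt counting in an $E$-type region for a transpose-related lattice in $\R^n\oplus\R^m$ with the weights $\ba$ and $\bb$ interchanged; since the argument in \cite{s60} works for arbitrary weight vectors and $\vartheta\mapsto-\vartheta^T$ preserves Lebesgue null sets, Theorem \ref{thm;schmidt use} then supplies the asymptotic directly.
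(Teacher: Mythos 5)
Your sandwich inequality and the volume computations are fine, and your upper bound is in essence the paper's: each lattice point whose $g_t$-orbit, $t\in[0,T]$, meets $F_{r,c}$ spends time at most $r$ there, and (apart from points of the bounded set $\widetilde F=\{\|\bx\|_\ba<e^r,\ \|\by\|_\bb<c\}$, whose cardinality is independent of $T$) any such point lies in $E_{T+\log c,\,c}$, so Theorem \ref{thm;schmidt use} via \equ{eq;diophantine claim} already yields $\limsup_{T}\frac1T\int_0^T\widehat{\mathbbm 1}_{F_{r,c}}(g_t\Lambda_\vartheta)\dd t\le|F_{r,c}|$ with no need for an asymptotic count of $F'$-regions. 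The genuine gap is your lower bound: it requires the two-sided asymptotic $\sharp(F'_{T,c}\cap\Lambda_\vartheta)\sim 2^dcT$, and neither of your routes establishes it. The duality shortcut fails: the involution $w(\bx,\by)=(\by,-\bx)$ sends $u(\vartheta)\Lambda_0$ to $\{(\bq,\ \bp-\vartheta\bq)\}$, the image of $\Z^d$ under a \emph{lower}-triangular unipotent, which is not of the form $u(\sigma)\Lambda_0$ in the swapped $(n,m)$-setup (it is rather the dual lattice, up to signs); and, more importantly, $w(F'_{T,c})$ is again a region in which the relevant quasi-norm is confined to the shrinking window $[e^{-T},1)$, not an $E$-type region with window $[1,e^T)$, so Theorem \ref{thm;schmidt use} does not apply to it. Counting functions of a lattice and of its dual are related only by transference-type inequalities, not by asymptotic equality. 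Your variance route is a plausible program, but it amounts to proving a new Schmidt-type metrical theorem for the cutoff $\|\vartheta\bq-\bp\|_\ba\ge e^{-T}$; the mean and second-moment estimates you invoke are precisely the hard content and are asserted rather than proved, so as written this is a missing proof.

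The paper avoids any new counting input for the lower bound: it uses that $\Lambda_\vartheta$ is $\big(D^+,C_c(\Xn)\big)$-generic for a.e.\ $\vartheta$ (\cite[Corollary 1.3]{s}, or Theorem \ref{thm;main rate} plus density), chooses $\varphi_1\in C_c(\R^d)$ with $\varphi_1\le\mathbbm 1_{F_{r,c}}$ and $\int_{\R^d}\varphi_1>|F_{r,c}|-\vre$, then $\varphi_2\in C_c(\Xn)$ with $\varphi_2\le\widehat\varphi_1\le\widehat{\mathbbm 1}_{F_{r,c}}$ and $\int_{\Xn}\varphi_2\dd\mu\ge|F_{r,c}|-2\vre$, and applies genericity to $\varphi_2$ to get $\liminf\ge|F_{r,c}|$. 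If you insist on your counting formulation, the cheapest repair is not to redo Schmidt's variance argument but to observe that, for $\bq\ne0$ and up to a set of bounded cardinality, $F'_{T,c}\cap\Lambda_\vartheta$ differs from $E_{T+\log c,\,c}\cap\Lambda_\vartheta$ exactly by the solutions with $\|\vartheta\bq-\bp\|_\ba<e^{-T}$, and then to show this deficit is $o(T)$ almost everywhere (for instance by splitting according to whether $\|\bq\|_\bb\le e^{\epsilon T}$, using \equ{eq;diophantine claim} with small $c$ on the first range and a Borel--Cantelli estimate on the very-well-approximable range); even this is more work than the paper's soft lower bound.
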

\begin{proof}
Let 
$\widetilde {N}_{T, c}$ be as in Corollary \ref{cor;count equi}. 
For every  $T> |\log c|$   and $\vartheta\in M$, similarly to (\ref{eq;key link}) one has
\[
\frac{1}{r}\int_0^T \widehat{\mathbbm 1}_{F_{r,c}} (g_t\Lambda_{\vartheta})\, dt\le
  \widetilde{ N}_{T+\log c}(\vartheta)+\sharp (\widetilde F\cap \Lambda_{\vartheta})
\]
where 
\[
\widetilde F:=\{(\mathbf x, \mathbf y)\in \mathbb R^d: 
\|\mathbf x\|_{\mathbf a}<e^r, 
\|\mathbf y\|_{\mathbf b}<c\}.
\]
For every $\vartheta$, $\sharp (\widetilde F\cap \Lambda_{\vartheta})$
is a number independent of $T$ and $|F_{r,c}|=|E_{r,c}|$. So for  $\vartheta\in M$
satisfying (\ref{eq;diophantine claim}) one has
\[
\limsup_{T\to \infty}
\frac{1}{T}\int_0^T \widehat{\mathbbm 1}_{F_{r,c}}
(g_t\Lambda_{\vartheta})\dd t\le |F_{r, c}|.
\]
On the other hand by \cite[Corollary 1.3]{s}, there is a conull subset
of $\vartheta \in M$ for which $\Lambda_{\vartheta}$ is $\big(D^+,
C_c(\Xn)\big)$-generic. For any $\vre>0$, since $\mathbbm{1}_{F_{r,c}} $
is a   {Riemann integrable} non-negative function, there is $\varphi_1 \in
C_c(\R^d)$ such that 
$$
\mathbbm{1}_{F_{r,c}}(\bv) \geq \varphi_1(\bv) \text{ for all } \bv \text{ and
} \int_{\R^d} \varphi_1 > |F_{r,c}|-\vre.
$$
Now $\widehat{\varphi}_1$ is a continuous non-negative 
integrable function on $\Xn$, and therefore there exists $\varphi_2 \in
C_c(\Xn)$ such that 
\eq{eq: second inequality1}{
\varphi_2(\Lambda) \leq \widehat{\varphi}_1(\Lambda) \leq \widehat{\mathbbm{1}}_{F_{r,c}}(\Lambda)
\text{ for all } \Lambda
}
and 
\eq{eq: second inequality}{
\int_{\Xn} \varphi_2 \dd \mu \geq \int_{\Xn} \widehat{\varphi}_1 \dd
\mu -\vre  = \int_{\R^d} \varphi_1 - \vre \geq |F_{r,c}| -2\vre.
}
Since $\vre>0$ was arbitrary, \equ{eq: second inequality1} and
\equ{eq: second inequality} imply that for almost every $\vartheta\in M$
\[
\liminf_{T\to \infty}
\frac{1}{T}\int_0^T \widehat{\mathbbm 1}_{F_{r,c}}
(g_t\Lambda_{\vartheta})\dd t\ge  |F_{r, c}|.
\]
This completes the proof. 
\end{proof}

\section{Pointwise equidistribution with respect to unbounded functions}\label{sec:unbdd}
In this section we prove 
Theorems \ref{thm;main} and \ref{thm;diophantine}. We first show that
for  Riemann integrable $f,$   {the function} $\widehat{f}$   {as in \equ{eq: Siegel transform}} belongs to the class $C_\alpha(X)$.

\begin{lem}\label{lem: Blichfeldt}
For any $d$ and all sufficiently large $r$ there are constants $c_1,
c_2$ such that 
if $\mathbbm 1_{B_r}$  is the characteristic function of the open ball 
of radius $r$ centered at origin, 
 then for all $\Lambda \in X $, 
\begin{align}\label{eq;upper lower}
c_1 \alpha(\Lambda) \leq \widehat{\mathbbm{1}}_{B_r}(\Lambda)
\leq c_2 \alpha(\Lambda).
\end{align}
In particular, for any Riemann   {integrable} $f: \R^d \to \R$,
$\widehat{f} \in C_\alpha(X)$. 
\end{lem}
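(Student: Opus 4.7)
The plan is to establish both inequalities by passing through the successive minima of $\Lambda$. Let $\lambda_1 \le \dots \le \lambda_d$ denote the successive minima of $\Lambda$ with respect to the Euclidean norm. The first step is to show that
\[
\alpha(\Lambda) \asymp_d \max_{0\le j\le d}(\lambda_1\cdots\lambda_j)^{-1},
\]
where the empty product is $1$. The upper bound on this max follows because any rank-$j$ sublattice $\Lambda'\subset\Lambda$ has $\lambda_i(\Lambda')\ge\lambda_i(\Lambda)$ and Minkowski's second theorem gives $d(\Lambda')\gg \lambda_1(\Lambda')\cdots\lambda_j(\Lambda')$; the reverse inequality is realized by the sublattice spanned by Minkowski's directions for the first $j$ successive minima.

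Next I count lattice points. Choose a basis $v_1,\dots,v_d$ of $\Lambda$ with $\|v_i\|\asymp_d\lambda_i$ (which exists by a Mahler/Davenport-type selection). Since every point of $\Lambda\cap B_r$ is a combination $\sum n_iv_i$ with $|n_i|\ll r/\lambda_i$ whenever $\lambda_i\le r$ and $n_i=0$ otherwise, a standard parallelepiped packing argument yields, for $r$ larger than an absolute constant,
\[
\widehat{\mathbbm{1}}_{B_r}(\Lambda)\asymp_{d,r}\prod_{i:\lambda_i\le r}\frac{r}{\lambda_i}=\frac{r^{j(r)}}{\lambda_1\cdots\lambda_{j(r)}},
\]
where $j(r)$ is the largest index with $\lambda_{j(r)}\le r$. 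The upper bound in \eqref{eq;upper lower} is then immediate from the first step, since every factor $r/\lambda_i$ is $\le r$ and one bounds $(\lambda_1\cdots\lambda_{j(r)})^{-1}\le C_d\alpha(\Lambda)$. For the lower bound, let $j^*$ realize the maximum in the first step. Since the minimizing index must satisfy $\lambda_{j^*}\le 1$ (otherwise one could drop it and decrease the product), assuming $r\ge 1$ guarantees $j^*\le j(r)$; the ratio $r^{j(r)-j^*}/(\lambda_{j^*+1}\cdots\lambda_{j(r)})\ge 1$ because each factor in the denominator lies in $[1,r]$, hence $\widehat{\mathbbm{1}}_{B_r}(\Lambda)\gg(\lambda_1\cdots\lambda_{j^*})^{-1}\asymp\alpha(\Lambda)$. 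Taking $r$ also larger than Minkowski's constant $C_d$ ensures that there is at least one nonzero vector, so the constants $c_1,c_2$ work uniformly.

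For the final statement, let $f$ be Riemann integrable and choose $r$ so that $\operatorname{supp}f\subset B_r$. Then $|\widehat f(\Lambda)|\le\|f\|_\infty\widehat{\mathbbm{1}}_{B_r}(\Lambda)\le c_2\|f\|_\infty\alpha(\Lambda)$, giving ($C_\alpha$-$2$). For ($C_\alpha$-$1$), let $E\subset\R^d$ be the (Lebesgue-null) set of discontinuities of $f$; then $\widehat f$ is continuous at every $\Lambda$ such that $\Lambda\cap E=\emptyset$. Writing $\Lambda=g\Z^d$ and using that for each fixed $v\in\Z^d\sm\{0\}$ the map $g\mapsto gv$ pushes Haar measure on $G$ to a measure absolutely continuous with respect to Lebesgue on $\R^d\sm\{0\}$, the set $\{g:gv\in E\}$ is Haar-null; a countable union over $v\in\Z^d\sm\{0\}$ shows that the discontinuity locus of $\widehat f$ is $\mu$-null.

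The main obstacle I anticipate is the lower bound: one must rule out the possibility that the sublattice realizing $\alpha(\Lambda)$ has its successive minima so large that $B_r$ fails to capture enough of its points. The argument above uses the crucial observation that the optimal index $j^*$ necessarily satisfies $\lambda_{j^*}\le 1$, which both pins down the location of the "heavy" part of $\Lambda$ inside $B_1$ and allows the scale $r$ to be chosen uniformly in $\Lambda$.
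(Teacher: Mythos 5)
Your argument is correct and follows essentially the same route as the paper: both reduce $\alpha(\Lambda)$ to a product of successive minima via Minkowski's second theorem and then compare with a successive-minima estimate for $\sharp(\Lambda\cap B_r)$. The only difference is that the paper cites the Betke--Henk--Wills successive-minima-type inequalities \cite{Henk} for the point count, whereas you sketch it directly by a reduced-basis parallelepiped argument (and you write ``minimizing'' where you mean ``maximizing'' for the index $j^*$).
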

\begin{proof}
For any discrete subgroup $  {\Delta} \subset \R^d$, the $i$-th
Minkowski 
successive minimum of $  {\Delta}$ with respect to $B_r$ is defined to be 
$$
\lambda_i\left(  {\Delta}\right) := \inf\left\{t>0: \dim \big( \spa\,
 (tB_r \cap   {\Delta})\big) \geq i \right\}.
$$
Let $r_0$ be large enough (depending on $d$) so that for any $r \geq
r_0$ and any $\Lambda
\in X$, $\lambda_1( \Lambda) <1$. 
The notation $x\asymp y$ will mean that $x$ and
$y$ are functions of discrete subgroups of  $\R^d$ and there are positive
constants $C_1, C_2$, depending on $d$ and $r$, such that
$\displaystyle{C_1 \leq x/y \leq C_2}$.  By Minkowski's second theorem, see e.g.\ 
\cite[\S VIII.2]{ca}, 
$$\lambda_1\left(  {\Delta} \right) \cdots \lambda_\ell\left(  {\Delta}\right)\asymp d\left(  {\Delta} \right),$$ where $d
\left(  {\Delta} \right)$
is the covolume of $  {\Delta}$ in $\spa \left(  {\Delta} \right)$ and $\ell$ is the rank
of $  {\Delta}$.   {Now for $\Lambda
\in X$ define $\Delta $ to be the subgroup of
$\Lambda$ generated by $\Lambda \cap \overline{B_r}$. Then}
$$\alpha(\Lambda) \asymp \big(\lambda_1(\Lambda) 
\cdots \lambda_j(\Lambda)\big)^{-1} =
\big(\lambda_1(  {\Delta}) 
\cdots \lambda_j(  {\Delta})\big)^{-1},$$ where $j$ is   {the} index for which $\lambda_j(\Lambda) \leq 1
< \lambda_{j+1}(\Lambda)$.

It follows from
\cite[Prop.\ 2.1 and Cor.\ 2.1]{Henk} (applied to $K=B_r, \, d=j$ and $\mathbb{L}=  {\Delta}$) that 
$$
  {\sharp}(  {\Delta}\cap B_r)\asymp
\big(\lambda_1 (  {\Delta} )\cdots \lambda_{j}(  {\Delta})\big)^{-1}.
$$
Since $\widehat{ \mathbbm{1}}_{B_r} (\Lambda) =  {\sharp}(\Lambda \cap B_r)   {-1}=
  {\sharp}(  {\Delta}\cap B_r)  {-1}$, \equ{eq;upper lower} follows. 

For the second assertion, note that for any Riemann   {integrable} $f:\R^d \to \R$ there are positive $r$ and $C$ so that $|f| \leq
C\cdot \mathbbm{1}_{B_r}$ and hence condition ($C_\alpha$-$2$) follows from
\equ{eq;upper lower}. To prove ($C_\alpha$-$1$), let $S$ be the set of
discontinuities of $f$ in $\R^d \sm \{0\}$, so that $|S|=0$. From
\equ{eq: Siegel transform} it follows that the set
$S'$ of discontinuities of
$\widehat{f}$ is contained in $$S'':=\{\Lambda: \Lambda \cap S \neq
\varnothing\}.$$ For each $\bv \in \Z^d \sm \{0\}$, the set of
$g \in G$ such that $g\bv \in S$ has Haar measure zero in $G$, and hence
$S''$ is a countable union of sets of $\mu$-measure zero. In
particular $\mu(S')=0$. 
\end{proof}

We now derive some general properties  from  equidistribution of measures.

\begin{lem}\label{lem;truncation}
Let $\psi \in C_\alpha(X)$ and
let $\{\mu_i\}$ be a sequence of probability measures on $\Xn$
such that $\mu_i \to \mu$, 
with respect to the weak-$*$ topology. 
Then for any non-negative $\varphi \in C_c( \Xn)$ one has
\begin{equation}\label{eq;compact}
\lim_{i\to \infty}\int_{\Xn}\varphi
\psi\dd\mu_i =\int_{\Xn} \varphi
\psi \dd\mu.
\end{equation}
\end{lem}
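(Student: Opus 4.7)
The plan is to exploit that $F := \varphi\psi$ is bounded, compactly supported, and continuous outside a $\mu$-null set, and then to apply a Portmanteau-style sandwich approximation by $C_c$ functions. Compact support is immediate from $\supp F \subset \supp\varphi$. Boundedness: the parenthetical in $(C_\alpha$-$2)$ records that every element of $C_\alpha(\Xn)$ is bounded on compact sets, so $\psi$ is bounded on $\supp\varphi$ and hence $F$ is bounded; set $M := \|F\|_\infty + 1$. Almost-everywhere continuity: by $(C_\alpha$-$1)$ the discontinuity set $N$ of $\psi$ has $\mu(N) = 0$, and since $\varphi$ is continuous, the discontinuity set of $F$ is contained in $N$.

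Given $\vre > 0$, I sandwich $F$ between two continuous compactly supported functions as follows. By outer regularity of $\mu$, pick an open $V \supset N$ with $\overline V$ compact (using that $F$ vanishes off $\supp\varphi$) and $\mu(V) < \vre/(4M)$. Urysohn's lemma on the locally compact metrizable space $\Xn$ yields $\eta \in C_c(\Xn)$ with $0\le\eta\le 1$, $\supp\eta \subset V$, and $\eta \equiv 1$ on some open set $W$ with $N \subset W \subset \overline W \subset V$. Tietze extension (followed by a compactly supported cutoff, valid since $F$ is continuous on the closed set $\Xn \sm W$) produces $\tilde F \in C_c(\Xn)$ with $\tilde F = F$ on $\Xn \sm W$ and $\|\tilde F\|_\infty \le M$. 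Now set
$$h_\pm := \tilde F \pm 2M\eta \in C_c(\Xn).$$
A case split confirms $h_- \le F \le h_+$ on $\Xn$: on $\Xn \sm W$ one has $\tilde F = F$ and $\eta \ge 0$, while on $W$ one has $\eta = 1$, so $h_+ \ge -M + 2M = M \ge F$ and symmetrically $h_- \le F$. Moreover $h_+ - h_- = 4M\eta$, whence $\int (h_+ - h_-)\dd\mu \le 4M\mu(V) < \vre$.

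The weak-$*$ hypothesis applied to $h_\pm \in C_c(\Xn)$ gives $\int h_\pm \dd \mu_i \to \int h_\pm \dd \mu$, so combining with the sandwich,
$$\limsup_{i\to\infty} \int F\dd\mu_i \ \le\ \lim_{i\to\infty}\int h_+ \dd\mu_i \ =\ \int h_+ \dd\mu \ \le\ \int F\dd\mu + \vre,$$
and a symmetric argument yields $\liminf_{i\to\infty} \int F\dd\mu_i \ge \int F\dd\mu - \vre$. Letting $\vre \to 0$ establishes \equ{eq;compact}. The only mildly delicate point is the Tietze/Urysohn sandwich construction; every other step is routine. (Note that non-negativity of $\varphi$ plays no role in this argument, though it may be exploited in subsequent uses of the lemma.)
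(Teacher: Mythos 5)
Your overall strategy (sandwich $F=\varphi\psi$ between $h_-\le F\le h_+$ in $C_c(\Xn)$ with $\int(h_+-h_-)\dd\mu<\vre$, then apply weak-$*$ convergence to $h_\pm$) is exactly the right one, and it is the paper's strategy too. The gap is in your construction of the sandwich. First, a minor point: $N$ is the discontinuity set of $\psi$, which need not be bounded, so an open $V\supset N$ with $\overline V$ compact need not exist; you would have to replace $N$ by the discontinuity set of $F$, which does lie in $\supp\varphi$. But the serious problem survives that fix: your construction needs a continuous $\eta$ with $\eta\equiv 1$ on an open $W$ containing the discontinuity set and $\supp\eta\subset V$ with $\mu(V)<\vre/(4M)$. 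Continuity forces $\eta\equiv 1$ on $\overline W$, hence $\overline W\subset V$ and $\mu(\overline W)<\vre/(4M)$. This is impossible whenever the discontinuity set of $F$ is dense in the open set $\{\varphi>0\}$: then any open $W$ containing it has $\overline W\supset\{\varphi>0\}$, so $\mu(\overline W)\ge\mu(\{\varphi>0\})$, a fixed positive number. A null set can be topologically large, and this is not a pathological corner case here --- it is exactly the situation in which the lemma is used. For $\psi=\widehat{\mathbbm 1}_{B_r}$ (Siegel transform of the indicator of a ball), the discontinuities are the lattices having a vector of Euclidean norm exactly $r$, a dense null subset of $\Xn$; so neither the Urysohn bump $\eta$ nor the requirement $\mu(V)<\vre/(4M)$ can be arranged, and the quantity $\int(h_+-h_-)\dd\mu=4M\int\eta\dd\mu$ cannot be made small. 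In short, your argument controls the discontinuity set pointwise (by enclosing it in a small open set), which is strictly stronger than what a.e.\ continuity gives.

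The paper avoids this by controlling oscillation in an averaged sense: after a partition of unity reduces to a coordinate chart, $\varphi\psi$ is a bounded, compactly supported, a.e.\ continuous function, hence Riemann integrable by Lebesgue's criterion, and the upper and lower Darboux (step-function) approximations --- suitably smoothed --- produce $h_1\le\varphi\psi\le h_2$ in $C_c$ with $\int(h_2-h_1)\dd\mu<\vre$ even when the discontinuity set is dense. If you want to salvage your write-up, replace the Urysohn/Tietze paragraph by this Riemann-sum (or oscillation-function) construction of the sandwich; the remainder of your proof, i.e.\ passing to the limit along $\mu_i$ via the sandwich, is fine as written.
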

\begin{proof} 
Using assumption $(C_\alpha$-$1)$ we see that the function $\varphi\psi$ is bounded,  compactly 
supported and continuous except on a set of measure zero. {By using a partition of unity, without loss of generality one can assume that $\varphi$ is supported on a coordinate chart. 
Applying Lebesgue's criterion for Riemann integrability to $\varphi\psi$, one can write $\int_X \varphi\psi\dd\mu$ as the limit of  upper and lower Riemann sums.  From this it  easily follows that}
 for any $\varepsilon>0$ there exist $h_1,h_2\in C_c(X)$  
such that $h_1\le \varphi\psi\le h_2$ and 
\begin{align}
\int_{X}(h_2-h_1)\dd\mu \leq\varepsilon.
 \label{eq;truncation 4}
\end{align}
Thus we have
\begin{align}
&\limsup_{ i\to \infty}\int_{\Xn}\varphi
\psi\dd\mu_i \le \int_{\Xn} 
h_2 \dd\mu
 \label{eq;truncation 1}\\
&\liminf_{ i\to \infty}\int_{\Xn}\varphi
\psi\dd\mu_i \ge \int_{\Xn} 
h_1 \dd\mu 
 \label{eq;truncation 2}\\
& \int_{\Xn}
h_1\dd\mu\le \int_{\Xn} \varphi
\psi\dd\mu 
\le \int_{\Xn}
h_2 \dd\mu.  \label{eq;truncation 3}
\end{align}
Therefore by taking $\varepsilon\to 0$, (\ref{eq;compact}) follows from 
 (\ref{eq;truncation 1})--(\ref{eq;truncation 3})
 and (\ref{eq;truncation 4}).
\end{proof}

\begin{cor}\label{cor;tail}
Let the notation be as in Lemma \ref{lem;truncation} and assume that
\begin{align}\label{eq;equi siegel}
\lim_{i\to \infty}\int_{\Xn} 
\psi\dd\mu_i=\int_{\Xn} 
\psi\dd\mu.
\end{align}
Then for any $\varepsilon >0$ there exists $i_0>0$ and
$\varphi \in C_c(\Xn)$ with $0 \leq \varphi \leq 1$ such that
\begin{align}\label{eq;con tail}
\left|\int_{\Xn} (1-\varphi)
\psi \dd\mu_i\right |< \varepsilon
\end{align}
for any $i\ge i_0$.
\end{cor}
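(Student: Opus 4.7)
The plan is to subtract the full integral of $\psi$ against $\mu_i$ from the integral against $\mu_i$ of a compactly supported truncation of $\psi$, use the convergence assumption \equ{eq;equi siegel} on the first piece and Lemma \ref{lem;truncation} on the second, and finally choose the truncation so that the limiting tail is small.

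\emph{Step 1: choose the truncation.} As noted right after the definition of $C_\alpha(X)$, the space $C_\alpha(X)$ is contained in $L^1(\Xn,\mu)$. Thus $\psi\in L^1(\Xn,\mu)$. Pick an increasing sequence $\{\varphi_n\}\subset C_c(\Xn)$ with $0\le \varphi_n\le 1$ and $\varphi_n\nearrow 1$ pointwise (using a proper exhaustion of $\Xn$ by compact sets together with Urysohn's lemma). By the dominated convergence theorem,
\[
\int_{\Xn}(1-\varphi_n)|\psi|\dd\mu \ \longrightarrow\ 0,
\]
so we may fix $\varphi:=\varphi_n$ for some $n$ large enough that
\eq{eq:tailmu}{
\left|\int_{\Xn}(1-\varphi)\psi\dd\mu\right|\ <\ \frac{\varepsilon}{2}.
}

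\emph{Step 2: transfer to $\mu_i$.} Write
\[
\int_{\Xn}(1-\varphi)\psi\dd\mu_i \ =\ \int_{\Xn}\psi\dd\mu_i\ -\ \int_{\Xn}\varphi\psi\dd\mu_i.
\]
By the standing assumption \equ{eq;equi siegel}, the first term on the right converges to $\int_{\Xn}\psi\dd\mu$. Since $\varphi\in C_c(\Xn)$ is non-negative, Lemma \ref{lem;truncation} applies and gives that the second term converges to $\int_{\Xn}\varphi\psi\dd\mu$. Combining these,
\[
\lim_{i\to\infty}\int_{\Xn}(1-\varphi)\psi\dd\mu_i\ =\ \int_{\Xn}(1-\varphi)\psi\dd\mu.
\]

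\emph{Step 3: conclude.} By \equ{eq:tailmu} the right-hand side above has absolute value strictly less than $\varepsilon/2$, so there exists $i_0$ such that \equ{eq;con tail} holds for all $i\ge i_0$. No step presents a real obstacle: the only input beyond the given hypotheses is that $C_\alpha(X)\subset L^1(\Xn,\mu)$, which is already established in the paper, and the application of Lemma \ref{lem;truncation} requires only the non-negativity of the cutoff $\varphi$, which is built into its construction.
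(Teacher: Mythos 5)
Your proposal is correct and follows essentially the same route as the paper: choose a continuous compactly supported cutoff $\varphi$ with $0\le\varphi\le 1$ making the $\mu$-tail $\int_{\Xn}(1-\varphi)\psi\dd\mu$ small (possible since $\psi\in L^1(\Xn,\mu)$), then compare $\int(1-\varphi)\psi\dd\mu_i$ with its $\mu$-counterpart using the hypothesis \equ{eq;equi siegel} for the full integral and Lemma \ref{lem;truncation} for the truncated integral. The only cosmetic difference is that the paper splits the error into three $\varepsilon/3$ pieces while you pass to the limit of $\int(1-\varphi)\psi\dd\mu_i$ directly; the substance is identical.
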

\begin{proof}
Since $
\psi \in L^1(\Xn,\mu)$,
there exists a compactly supported continuous function 
 $\varphi: \Xn\to [0, 1]$ such that 
 \begin{align}\label{eq;tail 1}
\left|\int_{\Xn} (1-\varphi)
\psi\dd\mu\right|<\frac{\varepsilon}{3}.
 \end{align}
 By Lemma \ref{lem;truncation} and (\ref{eq;equi siegel}), there
 exists $i_0>0$ such that for $i\ge i_0$ 
 \begin{align}
\left | \int_{\Xn} \varphi
\psi
\dd\mu_i- \int_{\Xn} \varphi
\psi
\dd\mu\right|&<\frac{\varepsilon}{3}
\label{eq;tail 2}\\
\left | \int_{\Xn}
\psi
\dd\mu_i- \int_{\Xn}
\psi
\dd\mu\right|&<\frac{\varepsilon}{3}
\label{eq;tail 3}.
 \end{align}
 It follows from (\ref{eq;tail 1}),
 (\ref{eq;tail 2}) and (\ref{eq;tail 3}) that  (\ref{eq;con tail}) holds if $i\ge i_0$.
\end{proof}

\begin{cor}\label{cor;dominated}
Let the notation be as in Lemma \ref{lem;truncation}.
 Assume  that
there exists a non-negative Riemann 
integrable  function 
$f_0$ on $\mathbb R^{ d }$ such that  $|\psi |\le \widehat{f_0}$ and 
\begin{align}\label{eq;dominated assume}
\lim_{i\to \infty}\int_{\Xn} \widehat {f_0} \dd\mu_i=\int_{\Xn}
\widehat {f_0} \dd\mu. 
\end{align}
Then
\[
\lim_{i\to \infty}\int_{\Xn} \psi\dd\mu_i=\int_{\Xn}  \psi \dd\mu.
\]
\end{cor}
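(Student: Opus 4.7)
The plan is to mimic a standard dominated convergence argument, using $\widehat{f_0}$ as the dominating function and applying the preceding two results (Lemma \ref{lem;truncation} and Corollary \ref{cor;tail}) to control respectively the bulk and the tail of the integrals. Note first that by Lemma \ref{lem: Blichfeldt}, $\widehat{f_0}$ belongs to $C_\alpha(X)$, so both Lemma \ref{lem;truncation} and Corollary \ref{cor;tail} are applicable to it; in particular the hypothesis \equ{eq;dominated assume} is precisely \equ{eq;equi siegel} for $\widehat{f_0}$.

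Given $\varepsilon>0$, I would first apply Corollary \ref{cor;tail} with $\widehat{f_0}$ in the role of $\psi$ to produce $i_0\in\N$ and a cutoff $\varphi\in C_c(\Xn)$ with $0\le \varphi\le 1$ such that
\[
\int_{\Xn}(1-\varphi)\widehat{f_0}\dd\mu_i<\varepsilon\qquad\text{for all }i\ge i_0.
\]
Using $|\psi|\le \widehat{f_0}$ and the non-negativity of $1-\varphi$, this immediately dominates the tails of $\psi$:
\[
\left|\int_{\Xn}(1-\varphi)\psi\dd\mu_i\right|\le \int_{\Xn}(1-\varphi)\widehat{f_0}\dd\mu_i<\varepsilon\qquad(i\ge i_0),
\]
and the analogous estimate $\left|\int_{\Xn}(1-\varphi)\psi\dd\mu\right|<\varepsilon$ holds for the limit measure (by enlarging $\varphi$ further if necessary, which is possible because $\widehat{f_0}\in L^1(\Xn,\mu)$, exactly as in the proof of \equ{eq;tail 1}).

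Next I would decompose
\[
\int_{\Xn}\psi\dd\mu_i-\int_{\Xn}\psi\dd\mu
=\int_{\Xn}\varphi\psi\dd\mu_i-\int_{\Xn}\varphi\psi\dd\mu
+\int_{\Xn}(1-\varphi)\psi\dd\mu_i-\int_{\Xn}(1-\varphi)\psi\dd\mu
\]
and apply Lemma \ref{lem;truncation} to the non-negative compactly supported $\varphi$, concluding that $\int_{\Xn}\varphi\psi\dd\mu_i\to\int_{\Xn}\varphi\psi\dd\mu$. Combining this with the two tail bounds yields $\left|\int_{\Xn}\psi\dd\mu_i-\int_{\Xn}\psi\dd\mu\right|<3\varepsilon$ for all sufficiently large $i$, and letting $\varepsilon\to 0$ finishes the proof.

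The only subtle point is ensuring that the single cutoff $\varphi$ works simultaneously for all $\mu_i$ (uniformly in $i\ge i_0$) and for $\mu$; this is exactly what Corollary \ref{cor;tail} is designed to provide, so no serious obstacle is expected. Everything else is bookkeeping.
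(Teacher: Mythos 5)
Your proposal is correct and follows essentially the same route as the paper: a single cutoff $\varphi\in C_c(\Xn)$, $0\le\varphi\le 1$, with the bulk term $\int\varphi\psi\dd\mu_i$ handled by Lemma \ref{lem;truncation} and the tails dominated via $|\psi|\le\widehat{f_0}$ together with Corollary \ref{cor;tail} applied to $\widehat{f_0}$ (using \equ{eq;dominated assume} as the hypothesis \equ{eq;equi siegel}), plus the $\mu$-tail from integrability of $\widehat{f_0}$. Your explicit remark that $\widehat{f_0}\in C_\alpha(X)$ by Lemma \ref{lem: Blichfeldt} (so the earlier results apply) is a detail the paper leaves implicit, but otherwise the arguments coincide.
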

\begin{proof}
By Lemma \ref{lem;truncation}, Corollary \ref{cor;tail} and
(\ref{eq;dominated assume}), 
for any $\varepsilon>0$ there exists $i_0>0$
and a continuous compactly supported function $\varphi: \Xn\to [0, 1]$
such that for $i\ge i_0$ one has
\begin{align*}
\left| \int_{\Xn}\varphi \psi \dd\mu_i-\int_{\Xn}\varphi \psi \dd\mu\right|
&<\frac{ \varepsilon}{3}, \\
\int_{\Xn}(1-\varphi) {\widehat f_0 }\dd\mu_i&< \frac{\varepsilon}{3}, \\
\int_{\Xn}(1-\varphi) {\widehat f_0 }\dd\mu &<\frac{ \varepsilon}{3}.
\end{align*}
Using the assumption $|\psi \, |\le \widehat{f_0}$ and these
inequalities, one finds
\[
\left|\int_{\Xn} \psi \dd\mu_i-\int_{\Xn}  \psi \dd\mu\right|<\varepsilon
\]
for $i\ge i_0$.
This completes the proof.
\end{proof}

Now, given $\psi\in C_\alpha(X)$, we are going to apply the results of
\S\ref{sec;count} to construct a function $f_0$  satisfying the
assumption of Corollary \ref{cor;dominated}.

\begin{lem}\label{lem;diophantine}
For $r> d $ let  $A_r$ be the annular region defined by 
$$A_r:=\{\mathbf v\in \mathbb R^{ d }:  d <\|\mathbf v\|<r\}.$$
Then for almost every $\vartheta\in M$, 
\begin{align}\label{eq;annular}
\lim_{T\to \infty}\frac{1}{T}\int_0^T 
\widehat {\mathbbm 1}_{A_r}\big(g_t
\Lambda_\vartheta\big)\dd t=|A_r|.
\end{align}
\end{lem}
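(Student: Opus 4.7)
The plan is to reduce the lemma to Corollary \ref{cor;dominated} by exhibiting a non-negative Riemann integrable dominating function $f_0 \geq \mathbbm{1}_{A_r}$ whose Siegel transform we already control along the orbit. Concretely, I let $\mu_T := \frac{1}{T}\int_0^T \delta_{g_t\Lambda_\vartheta}\,dt$ and take $\psi := \widehat{\mathbbm{1}}_{A_r}$, which lies in $C_\alpha(X)$ by Lemma \ref{lem: Blichfeldt}. By Theorem 1.2 of \cite{s} (i.e.\ \equ{shi}), for almost every $\vartheta\in M$ we have $\mu_T \to \mu$ in the weak-$*$ topology against $C_c(X)$, which is exactly the hypothesis of Lemma \ref{lem;truncation}. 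All that remains is (a) to construct $f_0$ so that $\mathbbm{1}_{A_r}\le f_0$ pointwise on $\R^d$, and (b) to verify \equ{eq;equi siegel} for $\widehat{f_0}$ along $\Lambda_\vartheta$.

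For the construction of $f_0$, I use the boundedness of $A_r$ together with the lower bound $\|v\|>d$. Write $v=(\bx,\by)\in\R^{\m}\times\R^{\n}$; since $\|v\|<r$ we have constants $C_1,C_2$ depending only on $r,\ba,\bb$ with $\|\bx\|_\ba\le C_1$ and $\|\by\|_\bb\le C_2$ for all $v\in A_r$. Since $a_i,b_j\le 1$ the condition $\|v\|_\infty>d\ge 1$ forces $\|\bx\|_\ba>1$ or $\|\by\|_\bb>1$. Accordingly I split $A_r = A_r^{(1)}\cup A_r^{(2)}$ where on $A_r^{(1)}$ one has $\|\by\|_\bb\ge 1$ and on $A_r^{(2)}$ one has $\|\by\|_\bb<1$ and $\|\bx\|_\ba>1$. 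Choosing $R_1,R_2,R_3,R_4$ large enough (e.g.\ $R_1=\log(2C_2)$, $R_2=2C_1C_2$, $R_3=\log(2C_1)$, $R_4=2C_1$), we get
$$A_r^{(1)}\subset E_{R_1,R_2}(\mathbb S^{{\m}-1},\mathbb S^{{\n}-1}), \qquad A_r^{(2)}\subset F_{R_3,R_4}.$$
Setting $f_0:=\mathbbm{1}_{E_{R_1,R_2}(\mathbb S^{{\m}-1},\mathbb S^{{\n}-1})} + \mathbbm{1}_{F_{R_3,R_4}}$ yields a non-negative Riemann integrable function with $\mathbbm{1}_{A_r}\le f_0$, and hence $\widehat{\mathbbm{1}}_{A_r}\le \widehat{f_0}$.

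For the remaining hypothesis, I apply Corollary \ref{cor;count equi} (with $A=\mathbb S^{{\m}-1}$, $B=\mathbb S^{{\n}-1}$) and Corollary \ref{cor;count equi f}: for a.e.\ $\vartheta$,
$$\lim_{T\to\infty}\frac{1}{T}\int_0^T \widehat{f_0}(g_t\Lambda_\vartheta)\,dt = |E_{R_1,R_2}(\mathbb S^{{\m}-1},\mathbb S^{{\n}-1})| + |F_{R_3,R_4}| = \int_{\R^d} f_0 = \int_X \widehat{f_0}\,d\mu,$$
the last equality being the Siegel integral formula. This is precisely \equ{eq;equi siegel} for $\widehat{f_0}$. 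Corollary \ref{cor;dominated}, applied along any sequence $T_i\to\infty$ with $\psi=\widehat{\mathbbm{1}}_{A_r}$, now yields
$$\lim_{T\to\infty}\frac{1}{T}\int_0^T \widehat{\mathbbm{1}}_{A_r}(g_t\Lambda_\vartheta)\,dt = \int_X \widehat{\mathbbm{1}}_{A_r}\,d\mu = |A_r|,$$
which is \equ{eq;annular}. The whole argument is a matter of bookkeeping; the only genuine point is to notice that the condition $\|v\|>d$ forces $v$ into the union of an $E$-type and an $F$-type region, so that the two counting corollaries of \S\ref{sec;count} jointly dominate the annulus.
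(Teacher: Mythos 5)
Your proof is correct and follows essentially the same route as the paper: dominate $\mathbbm{1}_{A_r}$ by $\mathbbm{1}_{E}+\mathbbm{1}_{F}$ for suitable parameters, verify \equ{eq;dominated assume} via Corollaries \ref{cor;count equi} and \ref{cor;count equi f} together with the Siegel integral formula, and conclude with Corollary \ref{cor;dominated}. The only difference is that you spell out the containment $A_r\subset E\cup F$ and the weak-$*$ genericity input explicitly, which the paper leaves implicit.
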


\begin{proof}
For ${r'},c>0$ let
$F_{{r'}, c}$ be as in (\ref{eq;f r c}).
Since  $r>  d $, there exist $c,{r'}> 1$ such that 
$A_r\subset E_{{r'}, c}\cup F_{{r'}, c}$.  It follows that 
\[
{\mathbbm 1}_{A_{r}}\le 
{\mathbbm 1}_{E_{{r'},c}}+{\mathbbm 1}_{F_{{r'},c}}.
\]
It follows from Corollaries \ref{cor;count equi} and \ref{cor;count equi f} that (\ref{eq;dominated assume}) holds
for $$f_0={\mathbbm 1}_{E_{{r'},c}}+{\mathbbm 1}_{F_{{r'},c}}.$$
Hence the conclusion of the lemma follows  from Corollary \ref{cor;dominated}.
\end{proof}

\begin{lem}\label{lem;key}
Let $B_r$ be the open Euclidean ball of radius $r>0$ centered at the origin in $\mathbb R^{ d }$. 
Then 
\begin{align*}
\lim_{T\to \infty}\frac{1}{T}\int_0^T \widehat {\mathbbm{1}}_{B_r}(g_t
\Lambda_{\vartheta})\dd t=
|B_r|
\end{align*}
for almost every $\vartheta\in M$. 
\end{lem}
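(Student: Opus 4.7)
The plan is to apply Corollary \ref{cor;dominated}, finding a non-negative Riemann integrable $f_0$ on $\R^d$ with $\mathbbm 1_{B_r}\le f_0$ whose Siegel transform is already known to equidistribute along the orbit of $\Lambda_\vartheta$.

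\emph{Reduction to a single radius.} Once the lemma is known for some $R>0$, the pointwise bound $\widehat{\mathbbm 1}_{B_r}\le\widehat{\mathbbm 1}_{B_R}$ for $r\le R$, together with the $(D^+,C_c(X))$-genericity of $\Lambda_\vartheta$ from \cite[Cor.~1.3]{s} (which supplies the background weak convergence $\mu_T\to\mu$), lets Corollary \ref{cor;dominated} applied with $f_0=\mathbbm 1_{B_R}$ deliver the conclusion for every $r\le R$. So it suffices to treat a single, conveniently large, $R$.

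\emph{Covering.} For any fixed $R>0$ and suitable parameters $r',c$ depending only on $R$, I observe the pointwise inclusion
$$B_R\ \subset\ E_{r',c}\cup F_{r',c}\cup C',\qquad C':=\{(\bx,\by)\in\R^d:\ |x_i|<1\ \forall i,\ |y_j|<1\ \forall j\}.$$
Indeed, any $v\in B_R$ with $\|\bx\|_\ba\ge 1$ or $\|\by\|_\bb\ge 1$ lies in $E_{r',c}\cup F_{r',c}$ once $c$ exceeds the product bound $R^{1/a_{\min}+1/b_{\min}}$ and $e^{r'}$ exceeds both $R^{1/a_{\min}}$ and $R^{1/b_{\min}}$, while any remaining $v$ has all components of absolute value $<1$ and so lies in $C'$. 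Corollaries \ref{cor;count equi} and \ref{cor;count equi f} provide equidistribution of $\widehat{\mathbbm 1}_{E_{r',c}}$ and $\widehat{\mathbbm 1}_{F_{r',c}}$, so Corollary \ref{cor;dominated} applied with $f_0=\mathbbm 1_{E_{r',c}}+\mathbbm 1_{F_{r',c}}+\mathbbm 1_{C'}$ reduces the entire lemma to equidistribution of $\widehat{\mathbbm 1}_{C'}$.

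\emph{Main obstacle: the central box $C'$.} Note that $\widehat{\mathbbm 1}_{C'}(\Lambda_\vartheta)=0$, because any nonzero $v=(\bp+\vartheta\bq,\bq)\in\Lambda_\vartheta$ has $\|\bx\|_\infty\ge 1$ when $\bq=0$ and $\|\by\|_\infty\ge 1$ when $\bq\ne 0$; but the translates $g_t\Lambda_\vartheta$ certainly can contain such vectors. For $\bq\ne 0$ one checks that $g_tv\in C'$ is equivalent to the simultaneous Dirichlet-type conditions $\|\bp+\vartheta\bq\|_\ba<e^{-t}$ and $\|\bq\|_\bb<e^t$, so Fubini gives, up to $[0,T]$-truncation,
$$\int_0^T\widehat{\mathbbm 1}_{C'}(g_t\Lambda_\vartheta)\,\dd t\ =\ \sum_{(\bp,\bq):\,\bq\ne 0}\max\bigl(0,\,-\log(\|\bp+\vartheta\bq\|_\ba\|\bq\|_\bb)\bigr).$$
A layer-cake rewriting then turns the right-hand side into $\int_0^\infty \sharp\{(\bp,\bq):\bq\ne 0,\ \|\bp+\vartheta\bq\|_\ba\|\bq\|_\bb<e^{-s},\ \|\bq\|_\bb<e^T\}\,\dd s$, which by Schmidt's asymptotic (Theorem \ref{thm;schmidt use}) applied at each level $c=e^{-s}$ should evaluate to $\int_0^\infty 2^d e^{-s}T\,\dd s=2^d T=|C'|\,T$. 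The key technical step will be obtaining Schmidt's asymptotic with enough uniformity in the parameter $c$ for this layer-cake evaluation to go through.
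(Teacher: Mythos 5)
Your reduction to the central box is where the argument stalls, and the part you defer is in fact the whole difficulty. Covering $B_R$ by $E_{r',c}\cup F_{r',c}\cup C'$ is fine, and Corollaries \ref{cor;count equi} and \ref{cor;count equi f} do handle the first two pieces; but to invoke Corollary \ref{cor;dominated} you must already know that $\frac1T\int_0^T\widehat{\mathbbm 1}_{C'}(g_t\Lambda_{\vartheta})\,\dd t\to|C'|$ for a.e.\ $\vartheta$, and this is not established. Your layer-cake identity is correct (for $\bq\neq 0$ one has $g_t(\vartheta\bq-\bp,\bq)\in C'$ exactly when $\log\|\bq\|_{\bb}<t<-\log\|\vartheta\bq-\bp\|_{\ba}$), but passing from $\frac1T\int_0^\infty \widetilde N_{T,e^{-s}}(\vartheta)\,\dd s$ to $\int_0^\infty 2^d e^{-s}\,\dd s$ requires interchanging $T\to\infty$ with the $s$-integral. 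Theorem \ref{thm;schmidt use} and \equ{eq;diophantine claim} give the asymptotics only for each fixed $c$, on a full-measure set that may depend on $c$; intersecting over a countable dense set of $c$'s is easy, but the interchange needs a dominating bound of the shape $\widetilde N_{T,e^{-s}}(\vartheta)\ll_{\vartheta} e^{-s}T+1$ uniformly in $s$ (equivalently, control of how many and how good the exceptionally close approximations with $\|\bq\|_{\bb}<e^T$ can be). That is a genuine metric Diophantine statement — essentially a Khintchine/Borel--Cantelli input or Schmidt's error term made uniform as $c\to 0$ — and nothing in the paper's toolkit as you have assembled it supplies it. So as written the proof is incomplete at its crux: proving equidistribution of $\widehat{\mathbbm 1}_{C'}$ along the orbit is roughly as hard as the lemma itself.

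The paper avoids the central region altogether by a translation trick. Every unimodular lattice $\Lambda$ has a point $\bv$ in any annulus of width $d$ centered at the origin, and $\sharp(B_r\cap\Lambda)=\sharp\big((B_r+\bv)\cap\Lambda\big)$ since $\Lambda+\bv=\Lambda$; choosing $\bv$ with $d+r<\|\bv\|<2d+r$ puts $B_r+\bv$ inside the annulus $A_{r'}$ with $r'=2d+2r$, whence $\widehat{\mathbbm 1}_{B_r}\le\widehat{\mathbbm 1}_{A_{r'}}$ pointwise on $X$. Since $A_{r'}$ stays away from the origin, every point of it has $\|\bx\|_{\ba}\ge 1$ or $\|\by\|_{\bb}\ge 1$, so $A_{r'}\subset E_{r'',c}\cup F_{r'',c}$ for suitable $r'',c$, and Lemma \ref{lem;diophantine} gives the needed equidistribution of $\widehat{\mathbbm 1}_{A_{r'}}$; Corollary \ref{cor;dominated} with $f_0=\mathbbm 1_{A_{r'}}$ then finishes the proof. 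If you want to salvage your route, you would either need to carry out the uniform-in-$c$ counting estimate you flagged, or replace the box $C'$ by this shift argument, which is the cleaner fix.
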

\begin{proof}
We first observe that any annular region of width $d$ centered at the
origin contains a lattice point of  
 any unimodular lattice in  $ \mathbb R^{ d }$. 
 Also for any unimodular lattice $\Lambda$ in $\R^d$ and any $\bv\in
 \Lambda$ one has 
 $$\sharp (B_r\cap \Lambda)=\sharp \big((B_r+\bv)\cap \Lambda\big)\,.$$
 It follows from these two observations that for any lattice $\Lambda
 \in X$, the number of lattice points in $B_r$ is at least the number
 of lattice points in $A_{r'}$, where $r' = 2d+2r$. That is, 
 $
\widehat{  {\mathbbm{1}}}_{B_r}\le  \widehat{\mathbbm{1}}_{A_{r'}}.
 $
The conclusion now  follows from Lemma \ref{lem;diophantine} and Corollary \ref{cor;dominated}.
 \end{proof}
 
 \begin{proof}[Proof of Theorem \ref{thm;main}] 
 It follows from Lemma \ref{lem;key} and the Siegel integral formula 
 that there is a Borel  subset $M'$ of $M$ 
  of full measure such that for 
 any $\vartheta\in M'$ 
 and  $r\in \mathbb \N$  one has
 \[
 \lim_{T\to \infty}\frac{1}{T} \int_0^T \widehat{\mathbbm 1}_{B_r}(g_t \Lambda_\vartheta)\dd t
 =\int_X \widehat{\mathbbm 1}_{B_r}\dd\mu.
 \]
 Moreover we assume that $\Lambda_\vartheta$ is $\big(D^+, C_c(X)\big)$-generic for  every $\vartheta\in M'$. 
 In view of (\ref{eq;upper lower}) for any $\psi\in 
 C_\alpha(X)$ there is $c>0$ 
 and $r\in \N$ such that 
 \[
 |\psi(\Lambda)|\le c\widehat{\mathbbm 1}_{B_r}(\Lambda)
 \]
 for every $\Lambda \in X$. 
 It follows from Corollary \ref{cor;dominated} that 
 $\Lambda_\vartheta$ is $(D^+,\psi)$-generic for every $\vartheta\in M'$,
 and 
 the proof is complete.  
 \end{proof}

\begin{proof}[Proof of Theorem \ref{thm;diophantine}] Our argument is similar to the one used in \cite{apt}. 
The assumption on the boundaries of $A$ and $B$ 
in $\mathbb S^{{\m}-1}$ and $\mathbb S^{{\n}-1}$ respectively implies that the boundary of  
$E_{r, c}(A, B)$ in $\R^d$ has zero Lebesgue measure.   
With the notation $f_{A,B,r,c}$ as in \equ{deff}, it follows from
\equ{eq;key link} and Theorem \ref{thm;main} that for almost every
$\vartheta\in M$ 
\begin{equation}\begin{aligned}\label{eq;application}
\sharp\big(E_{T,c}(A, B) \cap \Lambda_{\vartheta}\big) \cdot r  & \thicksim
\int_0^T\widehat{f}_{A,B,r,c}
(g_t \Lambda_{\vartheta})\dd t\\ &\thicksim |E_{r,c}(A, B)|\cdot T \quad \mbox{as }T \to 
\infty.
\end{aligned}\end{equation}

In order to conclude the proof,
 it suffices to show that 
\begin{align}\label{eq;compare}
|E_{r,c}(A, B)|\cdot T =|E_{T,c}(A, B)| \cdot r.
\end{align}
Indeed, when $T = kr $ for some $k \in \N$ we have $$E_{T,c}(A, B) =
\bigcup_{j=0}^{k-1} g_{-jr} \big(E_{r,c}(A, B)\big)$$ (a disjoint union), and \equ{eq;compare} follows from the fact
that the $g_t$-action preserves Lebesgue measure. From this one
deduces \equ{eq;compare} when  $T/r\in\Q$, and finally one gets
\equ{eq;compare} for arbitrary $T,r>0$ by continuity. 
\end{proof}

\end{document}

\ignore{
Let $D=\{g_t: t\in\R\}$ be a one-parameter subgroup of $G$, and let $\varphi$
be a real-valued integrable  function on $\Xn$. Denote $D^+ := \{g_t: t\ \ge 0\}$. We say that $\Lambda
\in \Xn$ is {\em $(D^+, \varphi)$-generic} if 
\begin{align}\label{eq;generic}
\lim_{T\to \infty}\frac{1}{T}\int_0^T \varphi(g_t\Lambda)  \dd t= \int_{\Xn}
\varphi \dd\mu, 
\end{align}
and for a collection $\mathcal{S}$ of functions, that $\Lambda$ is
{\em $(D^+, \mathcal{S})$-generic}  if it is $(D^+, \varphi)$-generic
for every $\varphi \in \mathcal{S}$. Let $C_c$ denote the
continuous compactly supported functions on $\Xn$.  It is a well-known consequence of
the Moore ergodicity theorem and the Birkhoff ergodic theorem, that
for any unbounded $D$, $\mu$-almost every $\Lambda$ is $(D^+,
C_c)$-generic. 

In this paper we discuss some 
strengthenings of this fact for diagonal subgroups, and their
number-theoretic consequences.

To this end, we fix two `weight vectors' 
\begin{align*}
&\mathbf a=(a_1,\ldots, a_n)\in \mathbb R^n_{>0} \\
&\mathbf b=(b_1, \ldots, b_m)\in \mathbb R^m_{>0}\qquad \mbox{such that } \\
&\sum_{i=1}^n a_i =\sum_{i=1}^m b_i=1,
\end{align*}
and refer to the case $a_1 = \cdots = a_n=1/n, \, b_1 = \cdots =b_m=1/m$ as
the case of {\em equal weights.} 
Let 
\[g_t=\mathrm{diag}(e^{a_1t}, \ldots, e^{a_nt}, e^{-b_1t},\ldots, e^{-b_mt} )\in G. 
\]

%
%
Let $1_k$ be the identity matrix of order $k$, and let 
\[u: M \to G, \ 
u(\vartheta)=\left(\begin{array}{cc}
1_n & \vartheta \\
0 & 1_m
\end{array}\right). 
\]
In the case of equal weights,
 the group $\{u(\vartheta): \vartheta \in M\}$ is the so-called
 `unstable horospherical subgroup' for $\{g_t\}$ and a standard
 argument implies that
\eq{eq: standard argument}{\forall \Lambda \in \Xn, \text{ for
    almost every } \vartheta \in M,\, u(\vartheta) \Lambda \text{ is } 
 (D^+, C_c)\text{-generic}
}  
(here and hereafter `almost every $\vartheta$' means almost every with
respect to Lebesgue measure on $M \cong \R^{nm}$). 
In \cite{s}, as a corollary of a more
general result, the second-named author generalized this to the case of non-equal
weights. Namely he showed that \equ{eq: standard argument} holds for
all choices of the weight vectors. 
The goal of this paper is to strengthen this result and obtain some number theoretic applications. 

Our first result provides equidistribution with an error bound for
smooth compactly supported functions.  We fix a right-invariant
Riemannian metric on $G$. This induces a right invariant metric on any
closed subgroup of  $G$ and a metric on $\Xn$.  
\begin{thm}\label{thm;main rate}
Let  $\Lambda \in \Xn,\varphi\in C_c^\infty(\Xn)$ and $\varepsilon>0$.
Then for  almost every $\vartheta\in M$ and every $T>2$
\begin{align}\label{eq;main rate goal}
\frac{1}{T} \int_0^T \varphi \left (g_t u(\vartheta)\Lambda \right)
\dd t=\int_{\Xn} \varphi \dd\mu+
O_{\varphi, \Lambda, \varepsilon, \vartheta}(T^{-1/2}\log^{1.5+\varepsilon} T).
\end{align}
\end{thm}
\combarak{ Try to rewrite the error in terms of
  something depending on $\varphi$ and something depending on the rest.}

There are two main ingredients in  the proof of Theorem \ref{thm;main rate}. 
One is a method which makes it possible to
derive pointwise equidistribution with an error rate from 
effective mixing and 2-mixing (according to \cite{s60} this method is
due to Cassels). We will state and prove a general result in 
Theorem \ref{thm;effective} which works in a broader context than we require. 
The other is the following effective 2-mixing result, generalizing an
`effective mixing' result of \cite{km12}:

\begin{thm}\label{thm;mixing}
There exists $\delta >0$ such that the following holds. 
Given $f\in C_c^\infty (M)$ and $\varphi,\psi \in C_c^\infty(\Xn) $
{with $\int_{\Xn} \varphi  = \int_{\Xn}\psi = 0$}
and 
 a compact subset  $L$  of $\Xn$, there exists $C
>0$ such that 
for any $\Delta, \Lambda \in L$ and $t, {w}\ge 0$  one has
\begin{align}\label{eq;mixing rate}
\left|
{\int_{M} f(\vartheta)\varphi
\left(g_tu(\vartheta)\Delta\right)\psi \left(g_{w} u(\vartheta)\Lambda\right)\dd  \vartheta}
\right|
\le C e^{-\delta \min (t,  {w}, |{w}-t|)}
\end{align}
where
\begin{align}\label{eq;mixing goal}
I( t, {w}):=
\int_{M} f(\vartheta)\varphi
\left(g_tu(\vartheta)\Delta\right)\psi \left(g_{w} u(\vartheta)\Lambda\right)\dd  \vartheta.
\end{align}
\end{thm}

Under the same assumptions we can prove effective $k$-mixing for all
$k$. We hope to return to this topic elsewhere. See also
\cite{BEG}. 

\smallskip

Now let 
\eq{eq: defn Lambda theta}{
\Lambda_0 = \Z^d, \ \Lambda_{\vartheta} = u(\vartheta) \Z^d = 
\left\{ \left( \begin{matrix}\vartheta \mathbf{ q} - \mathbf{p} \\
      \mathbf{q} \end{matrix} \right) : \mathbf{p} \in \Z^n, \,
  \mathbf{q} \in \Z^m \right\}. 
}
Our second main result is that for for almost
 every $\vartheta$, $\Lambda_{\vartheta}$ is $(D^+,
 \varphi)$-generic for certain unbounded functions $\varphi$ 
on $\Xn$.
Recall that $f: \mathbb R^{ d }\to \mathbb R$ is said to be Riemann
integrable if $f$ is  bounded with bounded support
and it is continuous except on  a set of Lebesgue measure zero.  
The {\em Siegel transform} of $f$ is the function $\widehat f$ on
$\Xn$ defined by 
\[
\widehat f\left ( \Lambda \right)=\sum_{\mathbf v\in \Lambda\nz} f(g\mathbf v).
\]
It was shown by Siegel that 
$
\int_{\Xn} \widehat f \dd\mu=\int_{\mathbb R^{ d }} f(\mathbf v) \dd\mathbf v.
$
\begin{thm}\label{thm;main}
Let $\mathcal{SR}$ be the class of all functions on $\Xn$ which are
Siegel transforms of Riemann integrable functions $\R^d \to \R$. 
Then for almost every $\vartheta\in M$, 
$\Lambda_{\vartheta}$ is $(D^+, \mathcal{SR})$-generic. 
\end{thm}



Since $(D^+,C_c)$-genericity was already proved in \cite{s}, the
main additional point  in
the proof of Theorem \ref{thm;main} is to obtain the correct upper
bound for Birkhoff averages of Siegel transforms of non-negative functions. 
To this end we
employ a lattice point counting result of Schmidt \cite{s60}. Lattice
point counting  was
used for a similar purpose in \cite{ms14} and \cite{agt} \comdima{(Is this a correct reference?)}, and the
connection between Schmidt's result and the action of $D^+$ was already noted in
\cite{apt}.
Note that in our result we assume that the lattices are of the form
$\Lambda_{\vartheta} = u(\vartheta)\Lambda_0$; 
we expect a similar result to be true if $\Lambda_0$ is replaced by
any other lattice in $\Xn$. However our proof does not yield this more
general statement. 

Armed with Theorem \ref{thm;main} (and the dynamical input \cite{s}
used in its proof) we can reverse the logic and derive
a result on lattice point counting.
Following \cite{dima weights}, we define `weighted quasi-norms' as follows: 
\[
\|\mathbf x\|_{\mathbf a}=\max_{1\le i\le n}\left
  \{|x_i|^{\frac{1}{a_i}} \right\}
\quad \left(\text{resp.}~\|\mathbf y\|_{\mathbf b}=\max_{1\le i\le m}
\left\{|y_i|^{\frac{1}{b_i}} \right\}\right).
\]
  Let $F_{\mathbf a t}$ be the $\mathbf a$-weighted  flow on $\mathbb R^n$ defined by 
 \[
F_{\mathbf a t}( \mathbf x)=(e^{a_1t}x_1, \ldots, e^{a_nt}x_n)
 \]
(note that in the case of equal weights, these are just homotheties of
$\R^n$). 
 For every $\mathbf x\in \mathbb R^n\nz$  let $\widehat{ \mathbf x}$ 
 be the unique intersection point of $\{ F_{\mathbf a t}(\mathbf x): t\in \mathbb R\}$
  with  the unit sphere $\mathbb S^{n-1}$  under the usual  Euclidean norm. 
Similarly, for  $\by \in \mathbb R^m\nz$  we define 
$\widehat \by \in \mathbb S^{m-1}$. 
\begin{thm}\label{thm;diophantine}
For  $c, T\in (0, \infty)$ and  measurable  $A \subset \mathbb
S^{n-1}, \ B \subset \mathbb S^{m-1}$, 
we let $E_{ T, c}( A, B)$ denote 
$$
\{(\bx, \by) \in \R^n \times \R^m: \|\bx\|_\ba\cdot \|\by\|_\bb<c, \,
1\le \|\by\|_\bb<e^T,\,  \widehat \bx\in  A, \, \widehat \by\in B\}.
$$
Suppose that with respect to the volume measure on $\mathbb S^{n-1}$ and 
$\mathbb S^{m-1}$, the measures of $A$ and $B$ are positive and  the boundaries of   
$A$ and $B$ have measure zero. 
Then for almost every $\vartheta\in M$
\begin{align}\label{eq;diophantine asym}
\sharp(E_{ T,c}(A, B) \cap \Lambda_{\vartheta})\thicksim |E_{ T,c}( A, B)|\quad \mbox{as } T\to \infty
\end{align}
(where on the right-hand side, $| \cdot |$ denotes the Lebesgue measure on $\R^n \times
\R^m$). 
\end{thm}

To put this result in context, note that a standard application of Minkowski's convex body
theorem shows that for any $\Lambda \in \Xn$, there are infintely many
solutions $(\mathbf{x}, \mathbf{y})\in \Lambda \nz$ to the inequality
\eq{eq: inequality}{
\|\mathbf{x}\|_{\mathbf{a}} \cdot \|\mathbf{y}\|_{\mathbf{b}} \leq 1. 
}
Thus in the special case $A = \mathbb S^{n-1}, \ B = \mathbb
S^{m-1}, \ c=1$, \equ{eq;diophantine asym} provides an asymptotic count 
for the number of solutions of \equ{eq: inequality} with a bound on 
$\|\mathbf{y}\|_{\mathbf{b}}$. The formula \equ{eq;diophantine
  asym} extends the asymptotic
count to general Riemann integrable $A, B$ and general $c$. The validity of 
\equ{eq;diophantine asym} for $\mu$-almost every $\Lambda$ was
established in \cite{S60a}, see also \cite{agt1}. Our result treats $\Lambda_{\vartheta}$
for almost every $\vartheta$, which is a set of
interest in diophantine approximation. In the case of unequal weights
it cannot be handled by the techniques of \cite{s60, S60a}.
}
\ignore{generalized this to the case of non-equal
weights. Namely he showed  \cite[Corollary 1.3]{s} that \equ{eq: standard argument} holds for
all choices of the weight vectors. 

In this paper we 
discuss some strengthenings of 
this fact and derive some number-theoretic consequences. 
To this end, we
%
%
Let 
In the case of equal weights,
 the group $\{u(\vartheta): \vartheta \in M\}$ is the so-called
 `unstable horospherical subgroup' for $\{g_t\}$ and a standard
 argument implies that
\eq{eq: standard argument}{\forall \Lambda \in \Xn, \text{ for
    almost every } \vartheta \in M,\, u(\vartheta) \Lambda \text{ is } 
 (D^+, C_c(\Xn))\text{-generic}
}  
(here and hereafter `almost every $\vartheta$' means almost every with
respect to Lebesgue measure on $M \cong \R^{nm}$). }